\documentclass[12pt]{amsart}
\usepackage{amsmath,amsthm,amsfonts,amssymb,latexsym,enumerate,url,hyperref}

\begin{document}

\theoremstyle{plain}

\newtheorem{thm}{Theorem}[section]
\newtheorem{lem}[thm]{Lemma}
\newtheorem{conj}[thm]{Conjecture}
\newtheorem{pro}[thm]{Proposition}
\newtheorem{cor}[thm]{Corollary}
\newtheorem{que}[thm]{Question}
\newtheorem{rem}[thm]{Remark}
\newtheorem{defi}[thm]{Definition}
\newtheorem{hyp}[thm]{Hypothesis}

\newtheorem*{thmA}{THEOREM A}
\newtheorem*{corB}{COROLLARY B}

\newtheorem*{thmC}{THEOREM C}
\newtheorem*{conjA}{CONJECTURE A}
\newtheorem*{conjB}{CONJECTURE B}
\newtheorem*{conjC}{CONJECTURE C}

\newtheorem*{thmAcl}{Main Theorem$^{*}$}
\newtheorem*{thmBcl}{Theorem B$^{*}$}

\numberwithin{equation}{section}

\newcommand{\Maxn}{\operatorname{Max_{\textbf{N}}}}
\newcommand{\frat}[1]{\Phi({#1})}
\newcommand{\fratt}[2]{\Phi^{#1}({#2})}
\newcommand{\Syl}[2]{{\rm Syl}_{#1}({#2})}
\newcommand{\dl}{\operatorname{\mathfrak{d}}}
\newcommand{\Con}{\operatorname{Con}}
\newcommand{\cl}{\operatorname{cl}}
\newcommand{\Stab}{\operatorname{Stab}}
\newcommand{\Aut}{\operatorname{Aut}}
\newcommand{\Ker}{\operatorname{Ker}}
\newcommand{\card}[1]{|#1|}
\newcommand{\Irr}[1]{{\rm Irr}({#1})}
\newcommand{\IBr}[1]{{\rm IBr}({#1})}
\newcommand{\SL}{\operatorname{SL}}
\newcommand{\FF}{\mathbb{F}}
\newcommand{\NN}{\mathbb{N}}
\newcommand{\C}[2]{\textbf{C}_{#1}({#2})}
\newcommand{\N}[2]{\textbf{N}_{#1}({#2})}
\newcommand{\F}{\mathbf{F}}
\newcommand{\aplii}[4]{\begin{array}{ccc}
	#1 &\longrightarrow & #2\\
	#3 &\longmapsto & #4
	\end{array}}

\renewcommand{\labelenumi}{\upshape (\roman{enumi})}

\newcommand{\GL}{\operatorname{GL}}
\newcommand{\Sp}{\operatorname{Sp}}
\newcommand{\PGL}{\operatorname{PGL}}
\newcommand{\PSL}{\operatorname{PSL}}
\newcommand{\SU}{\operatorname{SU}}
\newcommand{\PSU}{\operatorname{PSU}}
\newcommand{\PSp}{\operatorname{PSp}}

\newcommand{\Hall}[2]{{\rm Hall}_{#1}({#2})}

\providecommand{\V}{\mathrm{V}}
\providecommand{\E}{\mathrm{E}}
\providecommand{\ir}{\mathrm{Irr_{rv}}}
\providecommand{\Irrr}{\mathrm{Irr_{rv}}}
\providecommand{\re}{\mathrm{Re}}

\def\irrp#1{{\rm Irr}_{p'}(#1)}
\def\Irrs#1{{\rm Irr}_{\sigma'}(#1)}
\def\ibrp#1{{\rm IBr}_{p'}(#1)}
\def\lin#1{{\rm Lin}(#1)}

\newcommand{\Z}[1]{\textbf{Z}({#1})}
\def\Q{{\mathbb Q}}
\def\irr#1{{\rm Irr}(#1)}
\def\ibr#1{{\rm IBr}(#1)}
\def\irra#1{{\rm Irr}_{\rm A}(#1)}
\def\ibra#1{{\rm IBr}_{\rm A}(#1)}
\def \c#1{{\cal #1}}
\def\cent#1#2{{\bf C}_{#1}(#2)}
\def\nor{\trianglelefteq\,}
\def\oh#1#2{{\bf O}_{#1}(#2)}
\def\Oh#1#2{{\bf O}^{#1}(#2)}
\def\zent#1{{\bf Z}(#1)}
\def\det#1{{\rm det}(#1)}
\def\ker#1{{\rm ker}(#1)}
\def\norm#1#2{{\bf N}_{#1}(#2)}
\def\alt#1{{\rm Alt}(#1)}
\def\iitem#1{\goodbreak\par\noindent{\bf #1}}
   \def \mod#1{\, {\rm mod} \, #1 \, }
\def\sbs{\subseteq}

\def\gc{{\bf GC}}
\def\ngc{{non-{\bf GC}}}
\def\ngcs{{non-{\bf GC}$^*$}}
\newcommand{\notd}{{\!\not{|}}}
\newcommand{\Out}{{\mathrm {Out}}}
\newcommand{\Mult}{{\mathrm {Mult}}}
\newcommand{\Inn}{{\mathrm {Inn}}}
\newcommand{\IBR}{{\mathrm {IBr}}}
\newcommand{\IBRL}{{\mathrm {IBr}}_{\ell}}
\newcommand{\IBRP}{{\mathrm {IBr}}_{p}}
\newcommand{\ord}{{\mathrm {ord}}}
\def\id{\mathop{\mathrm{ id}}\nolimits}
\renewcommand{\Im}{{\mathrm {Im}}}
\newcommand{\Ind}{{\mathrm {Ind}}}
\newcommand{\diag}{{\mathrm {diag}}}
\newcommand{\soc}{{\mathrm {soc}}}
\newcommand{\End}{{\mathrm {End}}}
\newcommand{\sol}{{\mathrm {sol}}}
\newcommand{\Hom}{{\mathrm {Hom}}}
\newcommand{\Mor}{{\mathrm {Mor}}}
\newcommand{\St}{{\sf {St}}}
\def\rank{\mathop{\mathrm{ rank}}\nolimits}
\newcommand{\Tr}{{\mathrm {Tr}}}
\newcommand{\tr}{{\mathrm {tr}}}
\newcommand{\Gal}{{\it Gal}}
\newcommand{\Spec}{{\mathrm {Spec}}}
\newcommand{\ad}{{\mathrm {ad}}}
\newcommand{\Sym}{{\mathrm {Sym}}}
\newcommand{\Char}{{\mathrm {char}}}
\newcommand{\pr}{{\mathrm {pr}}}
\newcommand{\rad}{{\mathrm {rad}}}
\newcommand{\abel}{{\mathrm {abel}}}
\newcommand{\codim}{{\mathrm {codim}}}
\newcommand{\ind}{{\mathrm {ind}}}
\newcommand{\Res}{{\mathrm {Res}}}
\newcommand{\Ann}{{\mathrm {Ann}}}
\newcommand{\Ext}{{\mathrm {Ext}}}
\newcommand{\Alt}{{\mathrm {Alt}}}
\newcommand{\AAA}{{\sf A}}
\newcommand{\SSS}{{\sf S}}
\newcommand{\CC}{{\mathbb C}}
\newcommand{\CB}{{\mathbf C}}
\newcommand{\RR}{{\mathbb R}}
\newcommand{\QQ}{{\mathbb Q}}
\newcommand{\ZZ}{{\mathbb Z}}
\newcommand{\KK}{{\mathbb K}}
\newcommand{\NB}{{\mathbf N}}
\newcommand{\ZB}{{\mathbf Z}}
\newcommand{\OB}{{\mathbf O}}
\newcommand{\EE}{{\mathbb E}}
\newcommand{\PP}{{\mathbb P}}
\newcommand{\GC}{{\mathcal G}}
\newcommand{\HC}{{\mathcal H}}
\newcommand{\AC}{{\mathcal A}}
\newcommand{\BC}{{\mathcal B}}
\newcommand{\GA}{{\mathfrak G}}
\newcommand{\SC}{{\mathcal S}}
\newcommand{\TC}{{\mathcal T}}
\newcommand{\DC}{{\mathcal D}}
\newcommand{\LC}{{\mathcal L}}
\newcommand{\RC}{{\mathcal R}}
\newcommand{\CL}{{\mathcal C}}
\newcommand{\EC}{{\mathcal E}}
\newcommand{\GCD}{\GC^{*}}
\newcommand{\TCD}{\TC^{*}}
\newcommand{\FD}{F^{*}}
\newcommand{\GD}{G^{*}}
\newcommand{\HD}{H^{*}}
\newcommand{\hG}{\hat{G}}
\newcommand{\hP}{\hat{P}}
\newcommand{\hQ}{\hat{Q}}
\newcommand{\hR}{\hat{R}}
\newcommand{\GCF}{\GC^{F}}
\newcommand{\TCF}{\TC^{F}}
\newcommand{\PCF}{\PC^{F}}
\newcommand{\GCDF}{(\GC^{*})^{F^{*}}}
\newcommand{\RGTT}{R^{\GC}_{\TC}(\theta)}
\newcommand{\RGTA}{R^{\GC}_{\TC}(1)}
\newcommand{\Om}{\Omega}
\newcommand{\eps}{\epsilon}
\newcommand{\varep}{\varepsilon}
\newcommand{\al}{\alpha}
\newcommand{\chis}{\chi_{s}}
\newcommand{\sigmad}{\sigma^{*}}
\newcommand{\PA}{\boldsymbol{\alpha}}
\newcommand{\gam}{\gamma}
\newcommand{\lam}{\lambda}
\newcommand{\la}{\langle}
\newcommand{\ra}{\rangle}
\newcommand{\hs}{\hat{s}}
\newcommand{\htt}{\hat{t}}
\newcommand{\sgn}{\mathsf{sgn}}
\newcommand{\SR}{^*R}
\newcommand{\tn}{\hspace{0.5mm}^{t}\hspace*{-0.2mm}}
\newcommand{\ta}{\hspace{0.5mm}^{2}\hspace*{-0.2mm}}
\newcommand{\tb}{\hspace{0.5mm}^{3}\hspace*{-0.2mm}}
\def\skipa{\vspace{-1.5mm} & \vspace{-1.5mm} & \vspace{-1.5mm}\\}
\newcommand{\tw}[1]{{}^#1\!}
\renewcommand{\mod}{\bmod \,}

\newcommand{\I}[2]{{\rm I}_{#1}({#2})}
\newcommand{\Lin}[1]{{\rm Lin}({#1})}
\newcommand{\LinI}[2]{{\rm LinI}_{#1}({#2})}
\newcommand{\LinB}[2]{{\rm LinB}_{#1}({#2})}
\newcommand{\B}[2]{{\rm B}_{#1}({#2})}
\newcommand{\ov}[1]{\overline{#1}}
\newcommand{\OO}[2]{\mathbf{O}_{#1}({#2})}
\renewcommand{\O}[2]{\mathbf{O}^{#1}({#2})}
\def\X{\mathcal{X}}

\newcommand{\Irre}[1]{{\rm Irr}(#1)}
\newcommand{\Irra}[2]{{\rm Irr}_{#1}(#2)}
\def\st{\text{ }|\text{ }}
\newcommand{\inv}[1]{{#1}^{-1}}

\marginparsep-0.5cm

\renewcommand{\thefootnote}{\fnsymbol{footnote}}
\footnotesep6.5pt

\title[Restriction of $p'$-degree characters and relative normal complements]{Restriction of $p'$-degree characters, fields of values, and relative normal complements}
\author{David Cabrera-Berenguer}
\address{Departament de Matem\`atiques, Universitat de Val\`encia, 46100 Burjassot,
Val\`encia, Spain}
\email{david.cabrera@uv.es}

\thanks{This research is supported by Grant PID2022-137612NB-I00
 funded by MCIN/AEI/ 10.13039/501100011033 and ERDF ``A way of making Europe”. The author would like to thank Gabriel A. L. Souza for helpful conversations on the subject.
}

\keywords{}

\subjclass[2020]{Primary 20C15; Secondary 20D20}

\begin{abstract}
Let $G$ be a finite $p$-solvable group for a prime $p$, and let $P$ be a Sylow $p$-subgroup of $G$. Centella and Navarro proved that there is a canonical bijection ${\rm Irr}_{p'}(\Phi_{1_G})\to{\rm Irr}_{p'}(\Phi_{1_{\textbf N_G(P)}})$ preserving fields of values, where $\Phi_{1_G}$ denotes the principal projective indecomposable character of $G$. In the spirit of a result of Berkovich on the McKay bijection, we characterize when this projective correspondence can be realized by restriction, proving that it holds if and only if $\mathbf{N}_G(P)$ admits a certain relative normal complement in $G$. More generally, we establish a refined criterion incorporating fields of values, and we extend this characterization to $\pi$-separable groups with nilpotent Hall $\pi$-subgroups.
\end{abstract}

\maketitle

\section{Introduction}
A fundamental theme in the representation theory of finite groups is the local-global principle, which aims to predict global character properties from the local structure of a group, mainly from the structrue of a Sylow normalizer. Let $G$ be a finite group and let $P$ be a Sylow $p$-subgroup of $G$ for a prime $p$. In \cite{I86}, Isaacs proved  that if each character of $\Irr{\N GP}$ extends to $G$, then $\N G P$ has a normal complement in $G$. Namely, there exists $N\unlhd G$ such that $G=\N GPN$ and $\N N P=1$. Later, in \cite{B98}, Berkovich refined this criterion by showing that it suffices that each character of ${\N G P}$ with $p'$-degree extends to $G$. Under the lens of the McKay conjecture, this is equivalent to the restriction map
\[\Irra{p'}G\to\Irra{p'}{\N G P}\]
being a bijection, where $\Irra{p'}G$ is the set
of the irreducible characters of $G$ of degree not divisible by $p$.

Let $\Phi_{1_G}$ denote the projective indecomposable character of $G$ associated with the trivial $p$-Brauer character. If $G$ is $p$-solvable, then Centella and Navarro proved in \cite{CN08} that there is a canonical bijection
\[\Irra{p'}{\Phi_{1_G}}\to\Irra{p'}{\Phi_{1_{\N G P}}},\]
preserving fields of values, where $\Irra{p'}{\Phi_{1_G}}$ is the set of irreducible constituents of $\Phi_{1_G}$ with $p'$-degree. This bijection sends each $\chi\in\Irra{p'}{\Phi_{1_G}}$ to the unique irreducible constituent of $p'$-degree of $\chi_{\N G P}$.
However, as they noted, the equality $\card{\Irra{p'}{\Phi_{1_G}}}=\card{\Irra{p'}{\Phi_{1_{\N G P}}}}$ does not necessarily hold outside the $p$-solvable groups (take for instance $G=A_5$ and $p=2$).

Since the McKay bijection being realized by a restriction map yields the existence of a normal complement of $\N G P$ in $G$, one may wonder whether the same property for the previous bijection implies that $\N G P$ has a relative normal complement in $G$. We show that this is indeed the case. In fact, we prove a broader result incorporating fields of values, which extends naturally to $\pi$-separable groups with a nilpotent Hall $\pi$-subgroup.

To work with $\Irra{p'}{\Phi_{1_G}}$, we will use Isaacs' $\pi$-theory of $\pi$-separable groups. Suppose that $G$ is a $\pi$-separable group, and denote by $G^0$ the set of $\pi$-elements of $G$. Then a map $\varphi:G^0\to\mathbb C$ is a $\pi$-partial character if $\varphi$ is the restriction of some character $\chi$ of $G$ to $G^0$. If $\varphi$ cannot be expressed as a sum of two $\pi$-partial characters, then we say that $\varphi$ is irreducible. The set of irreducible $\pi$-partial characters of $G$ is denoted by $\I\pi G$. We will work with the canonical set of lifts $\B\pi G\subseteq\Irr G$ of $\I\pi G$ to $G$ (for more details, see \cite{I84} or \cite{I18}). This is because, as we shall see later, if $G$ is $p$-solvable then $\Irra{p'}{\B p G}=\Irra{p'}{\Phi_{1_G}}$.

If $n$ is a positive integer, we denote $\Q_n=\Q(e^{2\pi i/n})$ the $n$-th cyclotomic field. We write
\[\Irra{\mathbb Q_n}G=\{\chi\in\Irr G:\mathbb Q(\chi)\subseteq\mathbb Q_n\},\]
\[\Irra{p',\mathbb Q_n}{\B p G}=\B p G\cap\Irra{p'}G\cap\Irra{\mathbb Q_n}G.\]

If $P$ is a finite $p$-group and $n$ is a positive integer, then we define $\fratt n P$ as the unique smallest normal subgroup of $P$ such that $P/\fratt n P$ is isomorphic to a direct product of cyclic $p$-subgroups of order at most $p^n$. Notice that $\fratt 1 P=\frat P$, and by the fundamental theorem of finite abelian groups there is a positive integer $k$ with $P'=\fratt k P$ (for instance, we may take $k=\log_p\card P$). Thus,
\[P'=\fratt k P\subseteq\fratt{k-1}P\subseteq\ldots\subseteq\fratt 2 P\subseteq\frat P.\]
By the definition it easily follows that $\Irr{P/\fratt nP}$ is the set of linear characters of $P$ of order at most $p^n$. While our primary results are established for a single prime $p$, we present in the final section of the paper how these theorems generalize to the context of $\pi$-separable groups with a nilpotent Hall $\pi$-subgroup.

We prove the following.
\begin{thmA}
	Let $G$ be $p$-solvable, let $P\in\Syl p G$ and let $n$ be a positive integer. Write $K_n=\bigcap_{\chi\in\Irra{p',\mathbb Q_{p^n}}{\B p G}}\ker\chi$. Then the following are equivalent.
    \begin{enumerate}[a)]
        \item The restriction map $\Irra{p',\mathbb Q_p}{\B p G}\to \Irra{p',\mathbb Q_p}{\B p{\N G P}}$ is a bijection.
        \item It holds that $G=\N G PK_n$.
    \end{enumerate}
    Moreover, if the previous holds, then $G$ has $p$-length $1$ and \[\N G P\cap K_n=\fratt nP\OO{p'}{\N G P}.\] In general, the relative complement $K_n$ of $\N G P$ is unique.
\end{thmA}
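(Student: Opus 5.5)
Throughout I read condition a) with $\mathbb Q_{p^n}$ in place of $\mathbb Q_p$, since $K_n$ is defined using $\Irra{p',\mathbb Q_{p^n}}{\B p G}$. Write $N=\N GP$ and $L=\fratt nP\OO{p'}{N}$.

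The first step is purely local: describe $\Irra{p',\mathbb Q_{p^n}}{\B pN}$. Since $P\unlhd N$, the $p'$-degree members of $\B pN=\Irra{p'}{\Phi_{1_N}}$ should be the characters of $N/P'\OO{p'}N$. Adding the condition $\mathbb Q(\chi)\subseteq\mathbb Q_{p^n}$ should cut this down to linear characters of $P$ of order at most $p^n$, extended canonically. I would then show that this set is exactly $\Irr{N/L}$, which gives
\[\bigcap_{\psi\in\Irra{p',\mathbb Q_{p^n}}{\B pN}}\ker\psi=L.\]
To compare with $G$, I use two inputs. The Centella--Navarro bijection $\chi\mapsto$ (unique $p'$-degree constituent of $\chi_N$) preserves fields of values, so it restricts to a bijection $\Irra{p',\mathbb Q_{p^n}}{\B pG}\to\Irra{p',\mathbb Q_{p^n}}{\B pN}$. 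Also, every member of $\B pG$ contains $\OO{p'}G$ in its kernel, so $\OO{p'}G\subseteq K_n$.

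For b)$\Rightarrow$a): if $G=NK_n$, then restriction gives an isomorphism $G/K_n\cong N/(N\cap K_n)$. Each $\chi$ in the global set is then irreducible on $N$, so $\chi_N$ is the Centella--Navarro image of $\chi$. Restriction is therefore injective, and by the counting above it is a bijection onto the local set. Moreover, $N\cap K_n$ is the intersection of the kernels of all these restrictions, so the local kernel computation gives $N\cap K_n=L$.

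For a)$\Rightarrow$b): every $\chi$ in the global set restricts irreducibly, and the restrictions run over $\Irr{N/L}$. Intersecting kernels gives $N\cap K_n=L$, so $NK_n/K_n\cong N/L$, and it remains to prove $G=NK_n$. I would pass to $\bar G=G/K_n$, on which the global set is faithful. By a), every member restricts irreducibly to $\bar N$ and these restrictions exhaust $\Irr{\bar N}$, where $\bar N\cong N/L$ has a normal abelian Sylow $p$-subgroup of exponent at most $p^n$. The goal is $\bar G=\bar N$. Following Berkovich's argument, I would show that $\bar G$ has a normal Sylow $p$-subgroup. Then $\bar N=\N{\bar G}{\bar P}=\bar G$, after checking with Frattini-type arguments and $\OO{p'}G\subseteq K_n$ that $\OO{p'}{\bar G}$ is covered by $\bar N$. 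The tool here is the Fong--Swan / $\pi$-theory description of $\B p{\bar G}$: a faithful family of $B_p$-characters all of whose restrictions to $\bar N$ are irreducible forces $\bar G$ to have $p$-length one with $\bar P\unlhd\bar G$. I would prove this by induction on $|G|$ through a minimal normal subgroup. This is the main obstacle: controlling $|\bar G:\bar N|$ from irreducibility of restrictions alone, for instance via $\sum\chi(1)^2$ compared with $|\bar N|$, or by Clifford theory over $\OO{p}{\bar G}$. The same argument yields $p$-length $1$.

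Uniqueness: suppose $K\unlhd G$ satisfies $G=NK$ and $N\cap K=L$. Then $\Irr{G/K}$ restricts bijectively onto $\Irr{N/L}$. Since $G/K\cong N/L$, the characters of $G/K$ have $p'$-degree and values in $\mathbb Q_{p^n}$. I would then check that they lie in $\B pG$, using that $\B p$ is compatible with quotients and that these characters have $p$-special/$p'$ factorization with trivial $p$-part. Hence $K_n\subseteq K$, and $|G:K|=|N:L|=|G:K_n|$ gives $K=K_n$.
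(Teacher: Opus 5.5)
Your proposal has a genuine gap exactly where you place the ``main obstacle'': the implication a)$\Rightarrow$b) is never proved, and the input you plan to use for it is false.

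\textbf{The local set is not $\Irr{N/L}$.} The set $\Irra{p',\Q_{p^n}}{\B pN}$ is not $\Irr{N/L}$, and the $p'$-degree members of $\B pN$ are not all the characters of $N/P'\OO{p'}N$. Let $H$ be a $p$-complement of $N$ and $\theta$ a nontrivial character of the $p'$-group $N/P\OO{p'}N$. Then $[\theta_H,1_H]=0$, so $\theta\notin\B pN$ by Theorem~\ref{centnav}. Already $N=S_3$, $p=3$, $\theta=\mathrm{sgn}$ is an example, and such $\theta$ need not even have values in $\Q_{p^n}$. Only the kernel statement $\bigcap\ker\psi=L$ is true (Corollary~\ref{intbpprime}), and it needs a real argument: a Galois argument for $\fratt nP\subseteq\ker\psi$, and Fong characters plus Tate's theorem for the reverse inclusion. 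So ``the restrictions exhaust $\Irr{\bar N}$'' fails. Comparing $\sum\chi(1)^2$ with $|\bar N|$ could not bound $|\bar G:\bar N|$ anyway, since the global set is in general a proper subset of $\Irr{\bar G}$.

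\textbf{The missing idea.} It is short, needs no counting, and is essentially the paper's argument via Lemma~\ref{abhallnormal}. In $\bar G=G/K_n$ the global set lies in $\B p{\bar G}$ (Lemma~\ref{quoPre}) and is faithful. Lemma~\ref{interKernel} therefore gives $\OO{p'}{\bar G}=1$, not merely that it is ``covered by $\bar N$''. From $N\cap K_n=L$ you get $P\cap K_n=\fratt nP$, so $\bar P\cong P/\fratt nP$ is abelian. Since $\bar P$ contains $\OO p{\bar G}$, Hall--Higman 1.2.3 gives $\bar P\subseteq\C{\bar G}{\OO p{\bar G}}\subseteq\OO p{\bar G}$. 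Thus $PK_n\unlhd G$, and the Frattini argument yields $G=NK_n$.

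The $p$-length claim concerns $G$, not $\bar G$. Since $P\cap K_n\subseteq\frat P$, Tate's theorem makes $K_n$ $p$-nilpotent with normal $p$-complement $\OO{p'}G$, whence $G=N\OO{p'}G$.

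\textbf{Two further points.}

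First, you replaced $\Q_p$ by $\Q_{p^n}$ in a), but the statement asserts that the single $\Q_p$-condition controls every $n$. The bridge is as follows. The case $n=1$ gives $G=N\OO{p'}G$, and $\OO{p'}G\subseteq K_n$, so b) holds for all $n$. Conversely, $K_n\subseteq K_1$ gives b)$\Rightarrow$a).

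Second, your uniqueness argument inherits the false premise: $\Irr{G/K}\not\subseteq\B pG$ in general. It can be patched by using only $\B p{G/K}$ together with $\OO{p'}{G/K}=1$. However, the closing count $|G:K_n|=|N:L|$ presupposes $G=NK_n$ and $N\cap K_n=L$, that is, b) itself. The paper instead uses Theorem~\ref{largestbpprime}: $K_n$ is the largest normal subgroup with $\N{K_n}P\subseteq\fratt nP\OO{p'}G$. This gives $K\subseteq K_n$ for any relative complement $K$, hence $G=NK_n$, and then $K=K_n$ by comparing orders.
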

Indeed, condition a) of Theorem A is equivalent to that every $\chi\in\Irra{p',\mathbb Q_p}{\B p G}$ restricts irreducibly to $\N G P$, as we will see.
\section{Preliminaries}
Our notation for characters follows \cite{I06} and \cite{N18}. 
\begin{lem}\label{quoPre}
    Let $G$ be $\pi$-separable and let $N\unlhd G$. Let $\chi\in\Irr G$ be such that $N\subseteq\ker\chi$, and consider $\overline\chi$ its corresponding character in $G/N$. Then $\chi\in\B\pi G$ if and only if $\overline\chi\in\B\pi{G/N}$.
\end{lem}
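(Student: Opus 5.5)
We need to prove that $\B\pi G$ behaves well under quotients. The plan is to go back to Isaacs' construction of $\B\pi G$ via $\pi$-special characters and nucleus pairs (\cite{I84}, \cite{I18}), and check that every ingredient is compatible with the bijection $\Irr{G/N}\leftrightarrow\{\chi\in\Irr G: N\subseteq\ker\chi\}$.

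Recall the relevant description: $\chi\in\B\pi G$ if and only if $\chi$ is induced from a $\pi$-special character of some subgroup. Equivalently, there is a pair $(W,\gamma)$ with $\gamma\in\Irr W$ $\pi$-special and $\gamma^G=\chi$, and this pair can be taken to be the nucleus of $\chi$, which is unique up to $G$-conjugacy.

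We work with $H\le G$ containing $N$, and use the correspondence $H\mapsto H/N$. The first step is to show that a character $\gamma\in\Irr H$ with $N\subseteq\ker\gamma$ is $\pi$-special if and only if $\overline\gamma\in\Irr{H/N}$ is. This holds because $\pi$-specialness is defined through two conditions:
\begin{itemize}
\item degree and determinantal order being $\pi$-numbers, which are unchanged under inflation;
\item the same property for irreducible constituents of restrictions to subnormal subgroups, and the subnormal subgroups of $H/N$ are exactly $S/N$ for $S$ subnormal in $H$ containing $N$.
\end{itemize}
We also need the converse direction: if $\gamma$ is $\pi$-special and $S\nor\nor H$ is arbitrary, then $\gamma_{SN}$ has $\pi$-special constituents. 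So the characterization is consistent, and it can be cited directly from the standard fact (Isaacs) that inflation and deflation preserve $\pi$-specialness.

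With this, the direction $\overline\chi\in\B\pi{G/N}\Rightarrow\chi\in\B\pi G$ is immediate. Write $\overline\chi=\overline\gamma^{G/N}$ with $\overline\gamma$ a $\pi$-special character of $W/N$. Inflating gives $\chi=\gamma^G$ with $\gamma$ $\pi$-special on $W$, since induction commutes with inflation.

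For the converse, suppose $\chi\in\B\pi G$ with $N\subseteq\ker\chi$, and let $(W,\gamma)$ be a nucleus, so $\gamma^G=\chi$. The main obstacle is to show that $N\subseteq W$ and $N\subseteq\ker\gamma$. From $\gamma^G=\chi$ we get $\ker\chi=\operatorname{core}_G(\ker\gamma)\subseteq\ker\gamma\subseteq W$, and since $N\subseteq\ker\chi$ both inclusions follow. Then $\overline\gamma$ is $\pi$-special on $W/N$ by the first step, and $\overline\gamma^{G/N}=\overline\chi$, so $\overline\chi\in\B\pi{G/N}$.

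If one prefers not to rely on the "induced from $\pi$-special" characterization, the alternative is to follow the recursive definition of the nucleus. The normal subgroups used in that recursion, such as $\OO{\pi}{G}$ and $\OO{\pi'}{G}$, behave predictably modulo $N$, but this route would require more bookkeeping.
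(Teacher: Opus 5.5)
Your argument relies on the description ``$\chi\in\B\pi G$ if and only if $\chi$ is induced from a $\pi$-special character of some subgroup'', and only one direction of it is true. Every member of $\B\pi G$ is induced from its nucleus character, which is $\pi$-special. But an irreducible character induced from a $\pi$-special character need not lie in $\B\pi G$. Both halves of your proof conclude membership in $\B\pi{\cdot}$ from exactly this false implication: you exhibit $\overline\gamma^{G/N}=\overline\chi$ (respectively $\gamma^G=\chi$) with $\pi$-special inducing character, and stop. You never check that the inflated or deflated pair is actually a nucleus.

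Here is a counterexample to the implication you use. Let $\pi=\{3\}$, let $N=\langle x,y\rangle$ be extraspecial of order $27$ and exponent $3$ with $z=[x,y]$, and let $t$ be the involutory automorphism $x\mapsto x^{-1}$, $y\mapsto y^{-1}$, which centralizes $\Z N=\langle z\rangle$. Put $G=N\rtimes\langle t\rangle$, $M=\langle y,z\rangle$ and $H=M\langle t\rangle$. Let $\alpha$ be the linear character of $H$ with $\alpha(y)=\alpha(t)=1$ and $\alpha(z)=e^{2\pi i/3}$. It is well defined, has order $3$, and is therefore $\pi$-special. Since $G=NH$, Mackey gives $(\alpha^G)_N=(\alpha_M)^N=\theta$, which is irreducible of degree $3$ because $\alpha_M$ is not $x$-invariant. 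So $\alpha^G\in\Irr G$ extends $\theta$. With coset representatives $1,x,x^2$, the element $t$ fixes $H$ and swaps $xH$ and $x^2H$, with all coefficients equal to $\alpha(t)=1$; hence ${\rm det}(\alpha^G)(t)=-1$. Consequently $\alpha^G=\hat\theta\epsilon$, where $\hat\theta$ is the extension of $\theta$ whose determinantal order is a power of $3$ (it is $\pi$-special, so $\hat\theta\in\B\pi G$) and $\epsilon$ is the nontrivial character of $G/N$. All $\pi$-elements of $G$ lie in $N$, so $\hat\theta$ and $\hat\theta\epsilon$ have the same restriction to $G^0$. Distinct members of $\B\pi G$ have distinct restrictions to $G^0$, so $\alpha^G\notin\B\pi G$.

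What actually has to be shown is that the nucleus itself passes to the quotient. If $N\subseteq\ker\chi$ and $(W,\gamma)$ is a nucleus of $\chi$, then $(W/N,\overline\gamma)$ must be a nucleus of $\overline\chi$, and conversely every nucleus of $\overline\chi$ must inflate to one of $\chi$. Your core-of-the-kernel remark correctly gives $N\subseteq\ker\gamma\subseteq W$, and your inflation step for $\pi$-special characters handles the terminal case. The recursion still has to be followed, checking four things:
\begin{enumerate}
\item $\chi$ is $\pi$-factored if and only if $\overline\chi$ is. If $\chi=\alpha\beta$, the constituents of $\chi_N=\chi(1)1_N$ are the irreducible products of the constituents of $\alpha_N$ and $\beta_N$; uniqueness of $\pi$-factorization then forces $N\subseteq\ker\alpha\cap\ker\beta$.
\item The maximal normal subgroup $L$ for which the constituents of $\chi_L$ are $\pi$-factored contains $N$.
\item $L/N$ plays the same role for $\overline\chi$.
\item Inertia groups and Clifford correspondents are compatible with passing to $G/N$.
\end{enumerate}
Then one concludes by induction on $\card G$. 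This is the ``bookkeeping'' route you set aside, and it cannot be bypassed with the characterization you used. The paper itself does not carry this out; it cites Problem (4.7) of \cite{I18} for the lemma.
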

\begin{proof}
    See Problem (4.7) of \cite{I18}.
\end{proof}
\begin{lem}\label{interKernel}
Let $G$ be $\pi$-separable. Then $\bigcap_{\chi\in\B\pi G}\ker\chi=\OO{\pi'} G$.
\end{lem}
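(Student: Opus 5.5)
We prove the two inclusions $\OO{\pi'}G\subseteq\bigcap_{\chi\in\B\pi G}\ker\chi$ and $\bigcap_{\chi\in\B\pi G}\ker\chi\subseteq\OO{\pi'}G$ separately. Write $K=\bigcap_{\chi\in\B\pi G}\ker\chi$, which is a normal subgroup of $G$.

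\emph{$\OO{\pi'}G\subseteq K$.} It suffices to show that every $\chi\in\B\pi G$ contains $\OO{\pi'}G$ in its kernel. Isaacs' theory provides this directly: if $\chi\in\B\pi G$ and $N\unlhd G$, then every irreducible constituent of $\chi_N$ lies in $\B\pi N$. Apply this with $N=\OO{\pi'}G$. Since $N$ is a $\pi'$-group, its only $\pi$-element is $1$, so $\I\pi N=\{1_N\}$ and hence $\B\pi N=\{1_N\}$. Thus $\chi_N$ is a multiple of $1_N$, and $N\subseteq\ker\chi$.

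\emph{$K\subseteq\OO{\pi'}G$.} It suffices to show that $K$ is a $\pi'$-group. The restriction map $\B\pi G\to\I\pi G$, $\chi\mapsto\chi^0$, is a bijection, and $\I\pi G$ spans the space of class functions on $G^0$. In particular the regular-type function on $G^0$, namely the characteristic function of $\{1\}$ multiplied by a suitable constant, is a linear combination $\sum_{\chi\in\B\pi G}a_\chi\chi^0$.

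Suppose $x\in K$ is a $\pi$-element. Every $\chi\in\B\pi G$ satisfies $\chi(x)=\chi(1)$, since $x\in\ker\chi$. Evaluating the linear combination at $x$ and at $1$ therefore gives the same value. But the function equals the constant at $1$ and $0$ at every $\pi$-element $x\ne1$. Hence $x=1$.

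So $K$ contains no nontrivial $\pi$-elements, i.e.\ $K$ is a $\pi'$-group. Being normal in $G$, it lies in $\OO{\pi'}G$.

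The only delicate inputs are the two facts used from Isaacs' theory: the behaviour of $\B\pi$-characters under restriction to normal subgroups, and the fact that $\I\pi G$ is a basis of the class functions on $G^0$. Both are standard (see \cite{I18}), so no real obstacle remains.
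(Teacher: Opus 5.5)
Your argument is correct. Note, though, that the paper gives no argument of its own here: it simply cites Corollary (4.22) of \cite{I18}.

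You instead derive the lemma from two standard results of Isaacs' theory:
\begin{itemize}
\item for $\chi\in\B\pi G$ and $N\unlhd G$, the irreducible constituents of $\chi_N$ lie in $\B\pi N$, which gives $\OO{\pi'}G\subseteq\ker\chi$;
\item $\B\pi G$ is a set of lifts for $\I\pi G$, which gives that the intersection of kernels is a $\pi'$-group.
\end{itemize}
The citation is shorter. Your version has the advantage of isolating exactly which nontrivial results the lemma rests on.

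Two small refinements.

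First, from $\I\pi N=\{1_N^0\}$ alone you only learn that the unique member of $\B\pi N$ is some linear character of $N$. To conclude it is $1_N$, add that $1_N$ is $\pi$-special and therefore lies in $\B\pi N$.

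Second, the second inclusion needs only that $\I\pi G$ spans the class functions on $G^0$, not the basis theorem. Spanning is elementary: extend a class function on $G^0$ by zero, write it in terms of $\Irr G$, and restrict back. In fact you can bypass the delta function entirely. Let $x\in\bigcap_{\chi\in\B\pi G}\ker\chi$ be a $\pi$-element. Then $\varphi(x)=\varphi(1)$ for every $\varphi\in\I\pi G$. Each $\psi^0$ with $\psi\in\Irr G$ is a nonnegative integer combination of members of $\I\pi G$, so $\psi(x)=\psi(1)$ for all $\psi\in\Irr G$. This forces $x=1$.
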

\begin{proof}
    See Corollary (4.22) of \cite{I18}.
\end{proof}
\begin{thm}\label{fong}
    Let $G$ be $\pi$-separable, let $H$ be a Hall $\pi$-subgroup of $G$, and let $\chi\in\B\pi G$. Let $\alpha$ be a minimal degree constituent of $\chi_H$. Then the following hold.
    \begin{enumerate}[a)]
        \item $\chi(1)_\pi=\alpha(1)$ and $[\chi_H,\alpha]=1$.
        \item $[\psi_H,\alpha]=0$ for each $\psi\in\B\pi G$ different from $\chi$.
    \end{enumerate}
\end{thm}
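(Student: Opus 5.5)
The plan is to reduce everything to $\pi$-partial characters and then argue by induction on $|G|$, using Clifford theory relative to a normal $\pi$-subgroup. Since $H$ consists of $\pi$-elements, $\chi_H=(\chi^0)_H$, where $\chi^0\in\I\pi G$ is the restriction of $\chi$ to $G^0$. Both statements therefore concern the irreducible $\pi$-partial characters $\varphi\in\I\pi G$. Through the bijection $\B\pi G\to\I\pi G$, the theorem says the following: for each $\varphi$, every minimal degree constituent $\alpha$ of $\varphi_H$ is a ``Fong character'' of $\varphi$, meaning $\alpha(1)=\varphi(1)_\pi$, $[\varphi_H,\alpha]=1$, and $\alpha$ lies under no other $\varphi'$.

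The first reduction is to $\OO{\pi'}G=1$. By Lemma~\ref{interKernel}, every member of $\B\pi G$ contains $\OO{\pi'}G$ in its kernel. By Lemma~\ref{quoPre}, $\B\pi G$ is then identified with $\B\pi{G/\OO{\pi'}G}$. Also $H\cong H\OO{\pi'}G/\OO{\pi'}G$ is a Hall $\pi$-subgroup of the quotient, and all the quantities involved are preserved. So if $\OO{\pi'}G>1$ we conclude by induction. Otherwise $N=\OO\pi G>1$ and $N\subseteq H$.

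Now let $\theta\in\Irr N$ lie under $\chi$. By Isaacs' theory, $\theta$ is $\pi$-special, and the $\B\pi$-characters over $\theta$ are exactly $\psi^G$ with $\psi\in\B\pi{G_\theta}$ over $\theta$. After replacing $\theta$ by a suitable $G$-conjugate, I will choose it so that $H\cap G_\theta$ is a Hall $\pi$-subgroup of $T=G_\theta$; this uses a Frattini-type argument with $H$ acting on the $G$-orbit of $\theta$.

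If $T<G$, induction applied to $\psi$ in $T$ gives a Fong character $\alpha'\in\Irr{H\cap T}$, and I set $\alpha=\alpha'^H$. Its degree is $|H:H\cap T|\,\alpha'(1)=|G:T|_\pi\,\psi(1)_\pi=\chi(1)_\pi$. Mackey's formula applied to $(\psi^G)_H$ shows that $\alpha$ occurs once, and that the other double-coset summands lie over $H$-conjugates of $\theta$ outside the $H$-orbit of $\theta$, hence are disjoint from $\alpha$.

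If $T=G$, then $\theta$ has a canonical $\pi$-special extension $\hat\theta$. Gallagher's correspondence gives $\chi=\hat\theta\beta$ with $\beta\in\B\pi{G/N}$, where $H/N$ is a Hall $\pi$-subgroup of $G/N$. Induction on $G/N$ then yields $\alpha=\hat\theta_H\gamma$, with $\gamma$ the Fong character of $\beta$. Part b) follows from the uniqueness in Gallagher's theorem over $\theta$ together with the inductive part b), since characters lying over different $\theta$'s have restrictions to $H$ lying over different $H$-orbits.

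The main obstacle is that the statement concerns an \emph{arbitrary} minimal degree constituent, not just one particular Fong character that we construct. To handle it, I will first prove the lower bound $\beta(1)\ge\chi(1)_\pi$ for every constituent $\beta$ of $\chi_H$. This comes from the inequality $\chi(1)\le|G:H|\,\beta(1)$ obtained from Frobenius reciprocity. Since $\chi(1)_{\pi'}$ divides $|G:H|$ for $\B\pi$-characters, this gives $\chi(1)_\pi\le\beta(1)$. Consequently every minimal degree constituent has degree exactly $\chi(1)_\pi$. The induction above must then be run showing that every constituent of this degree arises as in the construction; tracking this through the Mackey decomposition in the case $T<G$ is the delicate step.
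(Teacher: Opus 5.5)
Your proposal has two genuine gaps, and they sit exactly where the content of the theorem lies.

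\textbf{The lower bound is not proved.} From $\chi(1)\le|G:H|\,\beta(1)$ and $\chi(1)_{\pi'}\mid|G:H|$ you only get $\chi(1)_\pi\le\bigl(|G:H|/\chi(1)_{\pi'}\bigr)\beta(1)$. The factor $|G:H|/\chi(1)_{\pi'}$ exceeds $1$ unless $\chi(1)_{\pi'}=|G|_{\pi'}$; for $\chi=1_G$ the inequality says nothing. Parts a) and b) are claims about an \emph{arbitrary} minimal-degree constituent, and you also leave open that every constituent of degree $\chi(1)_\pi$ arises from your construction. So at best your induction exhibits one constituent with the required properties.

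What handles all minimal-degree constituents at once is a global degree identity, the $\pi$-analogue of Fong's dimension formula: $\alpha(1)=\sum_{\varphi\in\I\pi G}[\varphi_H,\alpha]\,\varphi(1)_\pi$ for every $\alpha\in\Irr H$. It gives $\alpha(1)\ge[\varphi_H,\alpha]\,\varphi(1)_\pi$ for any constituent $\alpha$ of $\varphi_H$. Equality then forces $[\varphi_H,\alpha]=1$ and $[\psi_H,\alpha]=0$ for $\psi\ne\varphi$, so one only has to produce a single constituent of degree $\varphi(1)_\pi$. The paper does not reprove any of this. It quotes Isaacs' Theorem (3.4) for $\pi$-partial characters and transfers it to $\B\pi G$ through the bijection of Theorem (5.1).

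\textbf{The case $T=G$ rests on a false claim.} A $G$-invariant $\theta\in\Irr N$ with $N=\OO\pi G$ and $\OO{\pi'}G=1$ need not extend to $G$ at all, let alone to a canonical $\pi$-special character. Here is a counterexample.
\begin{enumerate}
\item[(1)] Take $\pi=\{2\}$ and $\bar G=(C_3\times C_3)\rtimes D_8$, where $D_8=\langle x,y\rangle$ acts through $D_8/\mathbf{Z}(D_8)$, with $x$ inverting the first factor and $y$ the second. Then $z=[x,y]$ is central in $\bar G$, lies in $\bar G'$, and $\bar G/\langle z\rangle\cong S_3\times S_3$.
\item[(2)] Let $W=\mathbb{F}_2^2\oplus\mathbb{F}_2^2$ be the natural faithful module of $S_3\times S_3\cong\GL(2,2)\times\GL(2,2)$, inflated to $\bar G$, and set $G=W\rtimes\bar G$.
\item[(3)] Then $\C GW=W\times\langle z\rangle$, so $\OO{2'}G=1$ since it centralizes $W$. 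Also $\OO2G=W\times\langle z\rangle$, because $\mathbf{O}_2(S_3\times S_3)=1$.
\item[(4)] The linear character $\theta=1_W\times\lambda$ with $\lambda(z)=-1$ is $G$-invariant. But $z\in G'$, so $\theta$ has no extension to $G$.
\item[(5)] Some $\chi\in\B2G$ lies over $\theta$. Indeed, on the $2$-group $N$ each member of $\B2G$ coincides with its $2$-partial character, and these cover $\Irr N$.
\end{enumerate}
So the factorization $\chi=\hat\theta\beta$ is simply unavailable. Treating this case would require climbing further up the $\pi$-series (canonical coprime extension to $\mathbf{O}_{\pi,\pi'}(G)$, then Clifford theory again), and part b) would still need a global argument as above.

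Your Mackey computation for $T<G$ is correct for the particular $\alpha=\alpha'^H$ you build, but it does not close either gap.
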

\begin{proof}
    See Theorem (3.4) of \cite{I18}, then use the bijection of Theorem (5.1) of \cite{I18}.
\end{proof}
In the notation of the previous theorem, if $\alpha\in\Irr H$ is a minimal degree constituent of $\chi_H$, where $\chi\in\B\pi G$, then we say that $\alpha$ is a Fong character associated to $\chi$.
\begin{lem}\label{fongfield}
    Let $G$ be $\pi$-separable and let $H$ be a Hall $\pi$-subgroup of $G$. If $\lambda\in\Irr H$ is a Fong character for some $\chi\in\B\pi G$, then $\Q(\chi)\subseteq\Q(\lambda)$.
\end{lem}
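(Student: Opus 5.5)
Let $\chi\in\B\pi G$ with Fong character $\lambda\in\Irr H$. The plan is a Galois argument. Fix $\sigma\in\mathrm{Gal}(\Q_{|G|}/\Q)$ fixing $\Q(\lambda)$; we will show $\chi^\sigma=\chi$, which gives $\Q(\chi)\subseteq\Q(\lambda)$.

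The first step is to show that $\B\pi G$ is stable under Galois action. This follows from Isaacs' canonical construction of $\B\pi G$: each $\chi\in\B\pi G$ is induced from a $\pi$-special character of a subgroup, and the construction is canonical, hence invariant under field automorphisms. Alternatively, $\B\pi G$ is the unique set of lifts of $\I\pi G$ characterized by its $\pi$-special factorizations. Since $\sigma$ preserves $\pi$-special characters and commutes with induction, we get $\chi^\sigma\in\B\pi G$. This is Corollary (4.?) in \cite{I18}, where $\B\pi G$ is shown to be closed under Galois conjugation; I would cite it.

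The second step uses Theorem \ref{fong}. By part a), $[\chi_H,\lambda]=1$. Applying $\sigma$ gives
\[[(\chi^\sigma)_H,\lambda^\sigma]=[(\chi_H)^\sigma,\lambda^\sigma]=[\chi_H,\lambda]=1,\]
and since $\lambda^\sigma=\lambda$, the character $\lambda$ is a constituent of $(\chi^\sigma)_H$. Now $\chi^\sigma\in\B\pi G$ by the first step. If $\chi^\sigma\neq\chi$, then Theorem \ref{fong} b) would force $[(\chi^\sigma)_H,\lambda]=0$, a contradiction. Hence $\chi^\sigma=\chi$ for every $\sigma$ fixing $\Q(\lambda)$, and Galois theory gives $\Q(\chi)\subseteq\Q(\lambda)$.

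The main obstacle is the Galois stability of $\B\pi G$. If no citation is available, I would prove it directly by induction on $|G|$ through Isaacs' construction. The key points are that a $\pi$-special character stays $\pi$-special under $\sigma$, since its degree and determinantal order are unchanged and its restrictions to subnormal subgroups remain irreducible. Moreover, products of $\pi$-special and $\pi'$-special characters are preserved. Finally, the canonical "nucleus" $(W,\gamma)$ of $\chi$ is sent to the nucleus $(W,\gamma^\sigma)$ of $\chi^\sigma$, because the construction depends only on character-theoretic data that $\sigma$ respects.
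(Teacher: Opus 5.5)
Your argument is correct, but it takes a different route from the paper. The paper gives no argument of its own. It quotes Isaacs' field-of-values result for Fong characters (Theorem (3.19) of \cite{I18}, which, judging from how the paper cites Theorem (3.4) for Theorem \ref{fong}, is stated at the level of $\I{\pi}{G}$). It then transports this to $\B{\pi}{G}$ through the lifting bijection $\chi\mapsto\chi^0$ of Theorem (5.1). You instead run the Galois argument directly on ordinary characters, using the separation property of Theorem \ref{fong} b) to force $\chi^\sigma=\chi$ whenever $\lambda^\sigma=\lambda$.

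Both routes need the same essential input, placed differently. For partial characters, Galois stability of $\I{\pi}{G}$ is automatic, since that set is defined intrinsically. But passing from $\Q(\chi^0)\subseteq\Q(\lambda)$ to $\Q(\chi)\subseteq\Q(\lambda)$ requires $\Q(\chi)=\Q(\chi^0)$, and that is exactly Galois stability of $\B{\pi}{G}$ plus injectivity of $\chi\mapsto\chi^0$. Your proof makes this dependence explicit and avoids the detour through $\I{\pi}{G}$; the paper's version is shorter only because it cites.

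Two small points. First, you should pin down an actual reference for Galois stability of $\B{\pi}{G}$; as you say, it follows from the canonical nucleus construction. Second, in your fallback sketch the claim that restrictions of a $\pi$-special character to subnormal subgroups ``remain irreducible'' is not right, since such restrictions are in general reducible. What you need is weaker. The irreducible constituents of $(\gamma^\sigma)_S$ are the $\theta^\sigma$ for constituents $\theta$ of $\gamma_S$. Moreover $o(\theta^\sigma)=o(\theta)$, because $\mathrm{det}(\theta^\sigma)=\mathrm{det}(\theta)^\sigma$. Together with $\gamma^\sigma(1)=\gamma(1)$, this shows that $\gamma^\sigma$ is again $\pi$-special.
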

\begin{proof}
    It is a direct consequence theorems (3.19) and (5.1) of \cite{I18}.
\end{proof}
\begin{thm}\label{isFong}
    Let $G$ be $\pi$-separable and let $H$ be Hall $\pi$-subgroup of $G$. If $\alpha\in\Irr H$ is quasiprimitive, then $\alpha$ is a Fong character.
\end{thm}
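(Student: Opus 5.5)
The plan is to argue by induction on $|G|$, using Isaacs' inductive description of $\B\pi G$ through normal $\pi$- and $\pi'$-subgroups. Fix a quasiprimitive $\alpha\in\Irr H$. The goal is to produce $\chi\in\B\pi G$ such that $\alpha$ is a constituent of $\chi_H$ with $\chi(1)_\pi=\alpha(1)$. By Theorem~\ref{fong}, any $\chi\in\B\pi G$ has a unique Fong character up to $N_G(H)$-conjugacy (more precisely, the Fong characters of $\chi$ form a single class), and by the bijection of Theorem (5.1) of \cite{I18}, the Fong characters partition a distinguished subset of $\Irr H$. So it suffices to show that $\alpha$ lies in that subset, i.e.\ that $\alpha$ is a minimal degree constituent of $\chi_H$ for some $\chi\in\B\pi G$.

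First I will reduce to the case $\OO{\pi'}G=1$. Set $N=\OO{\pi'}G$. The subgroup $HN/N\cong H$ is a Hall $\pi$-subgroup of $G/N$, and $\alpha$ viewed there is still quasiprimitive. By induction it is a Fong character of some $\overline\chi\in\B\pi{G/N}$. By Lemma~\ref{quoPre} the inflation $\chi$ lies in $\B\pi G$, its degree is unchanged, and its restriction to $H\cong HN/N$ is the same. Hence $\alpha$ is a Fong character of $\chi$.

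Now assume $\OO{\pi'}G=1$, so that $M=\OO\pi G\ne 1$ and $M\subseteq H$. Since $\alpha$ is quasiprimitive, $\alpha_M=e\theta$ for a single $\theta\in\Irr M$, and $\theta$ is $H$-invariant. The key step is to show that $\theta$ is in fact $G$-invariant, or at least that we can pass to $T=G_\theta$. The $\pi'$-part of the stabilizer situation is controlled by the Glauberman correspondence. As a first attempt, I would use that $\B\pi G$ characters lying over $\theta$ correspond, via Clifford correspondence with $T$, to $\B\pi T$ characters over $\theta$ (this is Isaacs' inductive construction of $B_\pi$). Then I would use that $H\subseteq T$ and apply induction to $T$ when $T<G$: if $\psi\in\B\pi T$ has Fong character $\alpha$, then $\chi=\psi^G\in\B\pi G$ satisfies $\chi(1)_\pi=\psi(1)_\pi$ because $|G:T|$ is a $\pi'$-number. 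Also $\alpha$ occurs in $\chi_H$, so $\alpha$ is a minimal degree constituent, since every constituent has degree at least $\chi(1)_\pi$ by Theorem~\ref{fong}(a).

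When $\theta$ is $G$-invariant, the plan is to use the fact that $\B\pi$ characters over an invariant $\pi$-character are those whose "$\pi$-special factor" extends $\theta$. Concretely, $\theta$ is $\pi$-special and $G$-invariant, so it has a canonical $\pi$-special extension $\hat\theta$ to $G$ (Isaacs' theorem for $\pi$-separable groups with $\theta$ invariant and $|G:M|$ handled via Gallagher-type arguments). Then $\alpha=\hat\theta_H\beta$ for some $\beta\in\Irr{H/M}$, and $\beta$ is again quasiprimitive, since $\hat\theta_H$ is a canonical extension compatible with normal subgroups. Induction in $G/M$ gives $\overline\gamma\in\B\pi{G/M}$ with Fong character $\beta$, and $\chi=\hat\theta\gamma$ is the desired character. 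This is where I expect the main obstacle: verifying that $\beta$ inherits quasiprimitivity and that $\hat\theta\gamma\in\B\pi G$ with the right degree. I would try to handle it via the uniqueness of the determinantal-order/$\pi$-special extension, restricted to normal subgroups of $H$.
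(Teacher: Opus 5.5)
Your reduction to $\OO{\pi'}G=1$ and your case $T=G_\theta<G$ are sound. Since $\alpha$ is a constituent of $\psi_H$ and $\alpha_M=e\theta$, the character $\psi$ automatically lies over $\theta$. Isaacs' Clifford correspondence for $B_\pi$-characters then gives $\psi^G\in\B\pi G$, and Theorem~\ref{fong}(a) shows that $\alpha$ has minimal degree in $(\psi^G)_H$.

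The gap is the invariant case. You assert that a $G$-invariant $\theta\in\Irr{\OO\pi G}$ has a canonical $\pi$-special extension to $G$. Isaacs' canonical extension theorem, however, requires $G/M$ to be a $\pi'$-group. Here $G/M$ has the nontrivial $\pi$-section $H/M$, and $\theta$ extends to $G$ iff it extends to $H$, so the obstruction lies exactly in $H/M$. Quasiprimitivity of $\alpha$ does not remove it. Concretely, take the following group.
\begin{itemize}
\item Let $\pi=\{2,3\}$ and $S=\SL(2,3)$ with center $Z$.
\item Let $R$ be a faithful $\FF_5[S/Z]$-module, with $S$ acting through $S/Z$, and set $X=R\rtimes S$.
\item Let $U$ be a faithful $\FF_2[X/Z]$-module and $G=U\rtimes X$.
\end{itemize}
Then $\OO{\pi'}G=1$, since a normal $\pi'$-subgroup lies in $U\times Z$. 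Also $M=\OO\pi G=U\times Z$ and $H=US$. Let $\alpha$ be the inflation of a faithful degree-$2$ character of $S\cong H/U$. It is quasiprimitive, and $\alpha_M=2\theta$ with $\theta=1_U\times\lambda$, $\lambda\neq 1_Z$. Because $Z$ is central in $G$, $\theta$ is $G$-invariant. Yet $\theta$ does not even extend to $H$: every linear character of $H$ is trivial on $Z\subseteq S'$. So in your invariant case there is no $\hat\theta$ and no Gallagher decomposition $\alpha=\hat\theta_H\beta$, and the induction cannot proceed. The correct $\chi$ here is the inflation from $G/UR\cong S$, which your $\OO\pi G$-based step never reaches.

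Even where an extension does exist, your claim that $\hat\theta\gamma\in\B\pi G$ is not justified by anything you cite. Membership in $\B\pi G$ is governed by nuclei, and you give no reason that multiplying a $B_\pi$-character by a $\pi$-special character preserves it. Handling a $G$-invariant, possibly non-extendible $\theta$ under $\OO\pi G$ is the substantive part of the result. The paper does not prove it; it simply cites \cite{I18}, Theorem (5.6). Your argument would need a genuinely different tool at this point, not an extension argument.
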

\begin{proof}
    See Theorem (5.6) of \cite{I18}.
\end{proof}
\begin{thm}\label{centnav}
    Let $G$ be $\pi$-separable and let $K$ be a $\pi$-complement of $G$. Let $\chi\in\Irr G$ be such that $\chi(1)$ is a $\pi'$-number. Then $\chi\in\B\pi G$ if and only if $[\chi_K,1_K]>0$.
\end{thm}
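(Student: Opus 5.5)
I would prove the two implications separately. For the direction ``$\chi\in\B\pi G\Rightarrow[\chi_K,1_K]>0$'' I would use the nucleus description of $\B\pi G$. For the converse I would argue by induction on $\card G$ through a minimal normal subgroup, using Lemma \ref{quoPre} to pass to quotients.

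\emph{($\Rightarrow$).} Let $\chi\in\B\pi G$ with $\chi(1)$ a $\pi'$-number. By Isaacs' construction of $\B\pi G$ (see \cite{I18}) we may write $\chi=\psi^G$, where $\psi\in\Irr W$ is $\pi$-special for a nucleus subgroup $W\le G$. Since $\chi(1)=\card{G:W}\psi(1)$ and $\psi(1)$ is a $\pi$-number, $\psi$ must be linear and $\card{G:W}$ must be a $\pi'$-number.
\begin{itemize}
\item Being linear and $\pi$-special, $\psi$ has $\pi$-power order, so $\psi$ is trivial on every $\pi'$-element of $W$.
\item Choose a Hall $\pi'$-subgroup $L$ of $W$ and extend it to a Hall $\pi'$-subgroup of $G$. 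That subgroup is $K^g$ for some $g\in G$, and $L=W\cap K^g$.
\item By Frobenius reciprocity and Mackey,
\[
[\chi_{K^g},1_{K^g}]=[\psi^G,(1_{K^g})^G]=[\psi,((1_{K^g})^G)_W]\ge[\psi,(1_L)^W]=[\psi_L,1_L]=1 .
\]
\end{itemize}
Since $\chi$ is a class function, $[\chi_K,1_K]=[\chi_{K^g},1_{K^g}]>0$.

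\emph{($\Leftarrow$).} Let $\chi$ have $\pi'$-degree with $[\chi_K,1_K]>0$, and proceed by induction on $\card G$.

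Case $N=\OO{\pi'}G\ne1$. Here $N\subseteq K$, so $[\chi_N,1_N]>0$. By Clifford's theorem all constituents of $\chi_N$ are conjugate, so $\chi_N$ is a multiple of $1_N$ and $N\subseteq\ker\chi$. The hypotheses pass to $G/N$ with the $\pi$-complement $KN/N$. Induction gives $\overline\chi\in\B\pi{G/N}$, and Lemma \ref{quoPre} then gives $\chi\in\B\pi G$.

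Case $N=\OO\pi G\ne1$. Let $\theta\in\Irr N$ lie under $\chi$. Then $\theta(1)$ divides $\chi(1)$ and is a $\pi$-number, so $\theta$ is linear. Its order is a $\pi$-number, so $\theta$ is $\pi$-special. Let $T=G_\theta$.
\begin{itemize}
\item By Gallagher-type arguments (coprime action), $\theta$ has a unique $\pi$-special extension $\hat\theta\in\Irr T$, which is linear of $\pi$-power order.
\item The Clifford correspondent of $\chi$ over $\theta$ is $\hat\theta\eta$ with $\eta\in\Irr{T/N}$.
\item Since $\card{G:T}$ divides $\chi(1)$, it is a $\pi'$-number, so $T$ contains a Hall $\pi$-subgroup, and a suitable conjugate of $K$ meets $T$ in a Hall $\pi'$-subgroup $K_T$ of $T$.
\item I would show, with a Mackey argument as above, that $[\chi_K,1_K]>0$ forces $[(\hat\theta\eta)_{K_T},1]>0$. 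Since $\hat\theta$ is trivial on $\pi'$-elements, this is equivalent to $[\eta_{K_TN/N},1]>0$.
\item If $T<G$, induction applied to $\hat\theta\eta$ in $T$ (or to $\eta$ in $T/N$) puts $\eta$ in $\B\pi{T/N}$. Isaacs' inductive description of $\B\pi G$ via Clifford correspondence over $\pi$-special characters of $\OO\pi G$ then yields $\chi\in\B\pi G$.
\item If $T=G$, then $\chi=\hat\theta\eta$ with $\eta\in\Irr{G/N}$. Apply induction to $\eta$ in $G/N$ and use that the product of a $\pi$-special character with a $\B\pi{}$-character lying over $1_N$ stays in $\B\pi G$.
\end{itemize}

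I expect the main obstacle to be this $\OO\pi G$ case. One must check carefully that the condition $[\chi_K,1_K]>0$ really transfers to the Clifford correspondent: $K$ need not lie in $T$, so the Mackey decomposition has several double cosets $TgK$, and one must pick the correct one. One must also check that the membership rule for $\B\pi G$ over $\OO\pi G$ matches this reduction.

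If that bookkeeping becomes awkward, my fallback is a counting argument. First show that the characters in $\B\pi G$ of $\pi'$-degree are in bijection with the $G$-classes of linear characters of the Hall $\pi$-subgroup $H$. This follows from Theorem \ref{fong} and Theorem \ref{isFong}, since linear characters are quasiprimitive. Then compare this with $\sum[\chi_K,1_K]$ over the $\pi'$-degree characters $\chi$, computed via $((1_K)^G)_H=\rho_H$.
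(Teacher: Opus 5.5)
The paper gives no argument for this statement; it only quotes Theorem (2.2) of \cite{CN08}. Your proposal is therefore an independent, self-contained route through Isaacs' nucleus machinery, and with two repairs it works. The ($\Rightarrow$) direction is correct. You could even take $g=1$, since $W\cap K$ is a $\pi'$-group on which the linear $\pi$-special nucleus character is trivial. The reduction modulo $\OO{\pi'}G$ is also correct. The two points needing repair are both in the $\OO\pi G$ case.

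First, the step you flag as the main obstacle is left unproved, and the plan you describe (``pick the correct double coset'') would not work. Positivity of a Mackey sum only says that some term $[\psi_{T\cap K^x},1]$ is positive, and a term with $T\cap K^x=1$ is positive for every $\psi$. The missing observation is that there is only one double coset. Since $\card{G:T}$ divides $\chi(1)$, $T$ contains a Hall $\pi$-subgroup $H$ of $G$, so $G=HK\subseteq TK$. Hence $((1_K)^G)_T=(1_{T\cap K})^T$, and $T\cap K$ is a Hall $\pi'$-subgroup of $T$ by an order count. This gives $[\chi_K,1_K]=[\psi_{T\cap K},1_{T\cap K}]$ exactly, for the Clifford correspondent $\psi\in\Irr{T|\theta}$. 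When $T<G$ you can then apply induction to $\psi$ itself, and no extension of $\theta$ is needed.

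Second, the claim that $\theta$ has a unique $\pi$-special extension ``by coprime action'' is wrong. $T/N$ need not be a $\pi'$-group, and uniqueness fails: for $G=S_4$, $\pi=\{2\}$, $N=\OO 2G$, $\theta=1_N$, both $1_G$ and the sign character qualify. Uniqueness is never used. Existence, however, is needed when $T=G$, and it must come from the $\pi'$-degree of $\chi$:
\begin{enumerate}
\item For $p\in\pi$ and $Q/N\in\Syl p{G/N}$, every constituent of $\chi_Q$ has $p$-power degree by (11.29) of \cite{I06}. So $p\nmid\chi(1)$ forces a linear constituent, which extends $\theta$.
\item For $p\notin\pi$, use (6.28) of \cite{I06}.
\item Corollary (11.31) of \cite{I06} then gives an extension to $G$, and its $\pi$-part is the $\pi$-special $\hat\theta$ you want.
\end{enumerate}

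With these repairs the induction closes, using the standard facts you invoke. For $\psi\in\Irr{T|\theta}$ one has $\psi^G\in\B\pi G$ if and only if $\psi\in\B\pi T$. Multiplying by a linear character of $\pi$-power order carries nuclei to nuclei, hence preserves $\B\pi G$. The counting fallback is unnecessary. Compared with the paper's one-line citation, your route gives a proof using only the nucleus description of $\B\pi G$ and elementary Clifford and extension theory.
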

\begin{proof}
    See Theorem (2.2) of \cite{CN08}.
\end{proof}
If $K$ is a $p$-complement of a group $G$, then it holds that $\Phi_{1_G}=(1_K)^G$ (see for instance Theorem (2.27) of \cite{N98}). Hence, if $G$ is $p$-solvable, from the previous theorem and Frobenius reciprocity it follows that
\[\Irra{p'}{\B p G}=\Irra{p'}{\Phi_{1_G}}.\]
\begin{thm}(Tate)
    Let $H$ be a nilpotent Hall $\pi$-subgroup of $G$ and let $N\unlhd G$. If $H\cap N\subseteq \frat H$, then $N$ has a normal $\pi$-complement.
\end{thm}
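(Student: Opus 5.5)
The plan is to reduce to the classical Tate theorem for a single prime, applied once for each prime in $\pi$, and then to intersect the resulting normal $p$-complements. Throughout, write $H=\prod_{p\in\pi}P_p$, where $P_p$ is the unique Sylow $p$-subgroup of the nilpotent group $H$. This is also a Sylow $p$-subgroup of $G$, because $H$ is a Hall $\pi$-subgroup.

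First I would check that $H\cap N$ is a Hall $\pi$-subgroup of $N$. Clearly $H\cap N$ is a $\pi$-group. Since $HN\le G$, we have $|G:HN|$ dividing $|G:H|$, which is a $\pi'$-number. Therefore
\[|N:H\cap N|=|HN:H|=|G:H|/|G:HN|\]
is a $\pi'$-number. It follows that for each $p\in\pi$, $P_p\cap N$ is the Sylow $p$-subgroup of the nilpotent group $H\cap N$, and hence is a Sylow $p$-subgroup of $N$.

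Next I would use that for a nilpotent group $\frat H=\prod_{p\in\pi}\frat{P_p}$. Taking $p$-parts of the hypothesis $H\cap N\subseteq\frat H$ then gives
\[P_p\cap N\subseteq \frat H\cap P_p=\frat{P_p}\qquad\text{for every } p\in\pi.\]
By the classical Tate theorem (the case $\pi=\{p\}$), $N$ has a normal $p$-complement $M_p$ for each $p\in\pi$.

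Finally, set $M=\bigcap_{p\in\pi}M_p\unlhd N$. For each $p\in\pi$, $M\le M_p$ and $M_p$ is a $p'$-group, so $M$ is a $\pi'$-group. Also $N/M$ embeds in $\prod_{p\in\pi}N/M_p$, and each factor $N/M_p\cong P_p\cap N$ is a $p$-group, so $N/M$ is a $\pi$-group. Hence $M$ is a normal $\pi$-complement of $N$.

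The only step needing real care is the first one: confirming that $H\cap N$ is a Hall $\pi$-subgroup of $N$, so that each $P_p\cap N$ is genuinely Sylow in $N$. This is exactly what the single-prime Tate theorem requires, and the order computation above settles it. The rest is formal.
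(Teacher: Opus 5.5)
Your proof is correct and follows the paper's argument. Like the paper, you decompose $H$ into its Sylow subgroups, use $\frat H\cap P_p=\frat{P_p}$ to get $P_p\cap N\subseteq\frat{P_p}$, apply Tate's theorem prime by prime, and intersect the normal $p$-complements. Your check that $P_p\cap N$ is Sylow in $N$ is valid but not needed, since it already follows from $N\unlhd G$ and $P_p\in\Syl p G$; your verification that the intersection is a normal $\pi$-complement fills in a step the paper leaves to the reader.
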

\begin{proof}
    Write $H=P_1\times\ldots\times P_k$ for its distinct Sylow subgroups. Fix $1\leq i\leq k$ and write $P=P_i$ for the Sylow $p$-subgroup. Since $H$ is nilpotent, we have $\frat P\in\Syl p{\frat H}$. As $P\cap N\subseteq\frat H$, then necessarily $P\cap N\subseteq\frat P$. By the main result of \cite{T64} it follows that $N$ is $p$-nilpotent. Taking now the intersection of all the normal $p$-complements of $N$, where $p$ runs over $\pi$, we obtain that $N$ has a normal $\pi$-complement.
\end{proof}
\begin{lem}\label{thompsonsep}
    Let $G$ be $\pi$-separable and let $H$ be a Hall $\pi$-subgroup of $G$. Then $\OO{\pi'}{\N G H}\subseteq\OO{\pi'}G$.
\end{lem}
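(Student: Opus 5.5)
We need to show $\OO{\pi'}{\N G H}\subseteq\OO{\pi'}G$ for $G$ $\pi$-separable and $H$ a Hall $\pi$-subgroup. The plan is an induction on $\card G$, reducing modulo $\OO{\pi'}G$ and then using the Hall--Higman type fact $\C G{\OO\pi G}\subseteq\OO\pi G$, valid whenever $\OO{\pi'}G=1$. Write $N=\N GH$ and $M=\OO{\pi'}N$. Since $M$ and $H$ are both normal in $N$ with coprime orders, we have $[M,H]\subseteq M\cap H=1$. So $M$ centralizes $H$, and in particular it centralizes $\OO\pi G$, which is contained in every Hall $\pi$-subgroup.

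\emph{Reduction.} Let $L=\OO{\pi'}G$ and $\overline G=G/L$. Then $\overline H=HL/L$ is a Hall $\pi$-subgroup of $\overline G$, and $\OO{\pi'}{\overline G}=1$. The image $\overline M=ML/L$ is a normal $\pi'$-subgroup of $\overline N=NL/L$. We know $\overline N\subseteq\N{\overline G}{\overline H}$, so $\overline M$ normalizes $\overline H$. Coprimality again gives $[\overline M,\overline H]\subseteq \overline M\cap\overline H=1$. Hence $\overline M$ centralizes $\overline H$, and therefore centralizes $\OO\pi{\overline G}\subseteq\overline H$.

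\emph{Conclusion.} Since $\overline G$ is $\pi$-separable with $\OO{\pi'}{\overline G}=1$, the Hall--Higman lemma (Lemma 1.2.3 type; see e.g. \cite{I18}) gives $\C{\overline G}{\OO\pi{\overline G}}\subseteq\OO\pi{\overline G}$. Thus $\overline M\subseteq \OO\pi{\overline G}$, which is a $\pi$-group. But $\overline M$ is a $\pi'$-group, so $\overline M=1$, that is, $M\subseteq L=\OO{\pi'}G$, as required.

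No induction is in fact needed; the only nontrivial input is the Hall--Higman centralizer lemma for $\pi$-separable groups. The one point to check carefully is the claim that $\overline M$ normalizes $\overline H$. This holds because $M$ normalizes $H$ and images of normalizing elements normalize images, so it is routine.
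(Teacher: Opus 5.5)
Your proof is correct and uses the same two ingredients as the paper: the observation that $\OO{\pi'}{\N GH}$ centralizes $H$, and the Hall--Higman containment $\C G{\OO\pi G}\subseteq\OO\pi G$ when $\OO{\pi'}G=1$. The paper inducts on $\card G$ and, in the inductive step, applies Hall--Higman inside $\N{G/N}{HN/N}$ with $N=\OO{\pi'}G$; you instead run its base-case argument directly in $G/\OO{\pi'}G$, which, as you note, makes the induction unnecessary.
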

\begin{proof}
We proceed by induction on $\card G$. Write $N=\OO{\pi'}G$ and suppose that $N>1$. As $\OO{\pi'}{G/N}=1$, by induction it follows that $\OO{\pi'}{\N{G/N}{HN/N}}=1$.
    Notice that $\OO{\pi'}{\N G H}\subseteq\C G H$.  Then by Hall-Higman 1.2.3 lemma it follows that
    \[\OO{\pi'}{\N GH}N/N\subseteq\C{G/N}{\OO\pi{\N{G/N}{HN/N}}}\subseteq\OO\pi{\N{G/N}{HN/N}}.\]
    Hence $\OO{\pi'}{\N G H}N/N$ is a $\pi$-group and therefore it has to be trivial, yielding that $\OO{\pi'}{\N GH}\subseteq N$. If $N=1$, then using again Hall-Higman 1.2.3 lemma it follows that
    \[\OO{\pi'}{\N G H}\subseteq\C G H\subseteq\C G{\OO\pi G}\subseteq\OO\pi G,\]
    yielding $\OO{\pi'}{\N G H}=1$.
\end{proof}
\begin{lem}\label{abhallnormal}
    Let $G$ be $\pi$-separable and let $H$ be a Hall $\pi$-subgroup of $G$. If $H$ is abelian, then $H\OO{\pi'}G\unlhd G$.
\end{lem}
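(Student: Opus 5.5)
We argue by induction on $\card G$, following the same pattern as the proof of Lemma~\ref{thompsonsep}. Write $N=\OO{\pi'}G$ and $H$ for the abelian Hall $\pi$-subgroup. The goal is to show that $HN/N$ is normal in $G/N$; since $HN$ is then the full preimage of a normal subgroup, this gives $HN\unlhd G$.

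Suppose first that $N>1$. Then $\bar G=G/N$ is $\pi$-separable and $\OO{\pi'}{\bar G}=1$. Its Hall $\pi$-subgroup $\bar H=HN/N\cong H$ is abelian. By induction, $\bar H\OO{\pi'}{\bar G}=\bar H$ is normal in $\bar G$, so $HN\unlhd G$. Hence we may assume $N=1$, and must show $H\unlhd G$.

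Now let $N=1$ and put $M=\OO\pi G$. By the Hall--Higman Lemma 1.2.3, $\C G M\subseteq M$. Since $M$ is a normal $\pi$-subgroup, it lies in every Hall $\pi$-subgroup, so $M\subseteq H$. Because $H$ is abelian, $H\subseteq\C G M\subseteq M$. Therefore $H=M=\OO\pi G$, which is normal in $G$.

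The only point needing care is the base case $N=1$, where the containment $H\subseteq\C GM$ depends on $M\subseteq H$ and on $H$ being abelian. The quotient step is routine: Hall subgroups map onto Hall subgroups of the quotient.
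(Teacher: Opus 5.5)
Your proof is correct and is essentially the paper's argument: pass to $G/N$ with $N=\OO{\pi'}G$, use Hall--Higman Lemma 1.2.3 to get $\C{G/N}{\OO\pi{G/N}}\subseteq\OO\pi{G/N}\subseteq HN/N$, and use that $HN/N$ is abelian to force $HN/N=\OO\pi{G/N}$. The induction is unnecessary: since $\OO{\pi'}{G/N}=1$, you can apply your $N=1$ argument directly to $G/N$, which is exactly what the paper does.
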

\begin{proof}
Write $N=\OO{\pi'}{G}$.
    Since $\OO{\pi'}{G/N}=1$, by Hall-Higman 1.2.3 Lemma we have that
    \[\C{G/N}{\OO\pi{G/N}}\subseteq\OO\pi{G/N}\subseteq HN/N.\]
    However, since $H$ is abelian, so is $HN/N$, and hence $HN/N\subseteq\C{G/N}{\OO\pi{G/N}}$. Then $HN/N=\OO\pi{G/N}$ and in particular $HN\unlhd G$.
\end{proof}
\begin{lem}\label{tatefrat}
    Let $P\in\Syl p G$ and suppose that each linear character of order $p$ of $P$ extends to $G$. Then $G$ has a normal $p$-complement.
\end{lem}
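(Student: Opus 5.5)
Since each linear character of $P$ of order $p$ is trivial on $\frat P$, the hypothesis says that every $\lambda\in\Irr{P/\frat P}$ extends to some $\chi\in\Irr G$. The plan is to use these extensions to build a normal subgroup $N$ of $G$ with $P\cap N\subseteq\frat P$. Then the theorem of Tate recalled above, applied with $\pi=\{p\}$ and $H=P$, shows that $N$ has a normal $p$-complement. Finally we pass from $N$ to $G$.

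\textbf{Step 1 (choosing $N$).} For each $\lambda\in\Irr{P/\frat P}$ fix an extension $\chi_\lambda\in\Irr G$, and set
\[
N=\bigcap_{\lambda}\ker{\chi_\lambda}\unlhd G .
\]
Since $\chi_\lambda$ is linear, $(\chi_\lambda)_P=\lambda$, so $\ker{\chi_\lambda}\cap P=\ker\lambda$. Therefore
\[
P\cap N=\bigcap_\lambda\ker\lambda=\frat P,
\]
because $P/\frat P$ is elementary abelian and the intersection of the kernels of all its linear characters is trivial. Tate's theorem then gives that $N$ has a normal $p$-complement $M$. As $M$ is characteristic in $N$, we get $M\unlhd G$.

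\textbf{Step 2 (reducing to $G/M$).} The linear characters $\chi_\lambda$ contain $M$ in their kernels, so they are linear characters of $\bar G=G/M$. In $\bar G$ the Sylow $p$-subgroup is $\bar P\cong P$, and $\bar N=N/M$ is a normal $p$-subgroup. It remains to show that $\bar G$ has a normal $p$-complement, since its preimage is then a normal $p$-complement of $G$.

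\textbf{Step 3 (abelian quotient).} The group $\bar G/\bar N$ embeds in a direct product of cyclic groups, being the image of $G$ under $g\mapsto(\chi_\lambda(g))_\lambda$. Hence $G/N$ is abelian, so $G/N$ has a normal $p$-complement $L/N$. Now $L$ has Sylow $p$-subgroup $P\cap L=P\cap N=\frat P$, and $|L:N|$ is prime to $p$.

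\textbf{Step 4 (the main obstacle).} We still need a normal $p$-complement of $L$. Here we apply Tate's theorem once more, now to the normal subgroup $L$ of $G$: since $P\cap L=\frat P\subseteq\frat P$, Tate's theorem shows that $L$ has a normal $p$-complement $K$. Then $K$ is characteristic in $L$, hence $K\unlhd G$, and $|G:K|=|G:L|\,|L:K|$ is a power of $p$. Thus $K$ is a normal $p$-complement of $G$.

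With Step 4 in hand, Steps 1 and 2 become unnecessary. The whole proof reduces to showing that $G/N$ is abelian with $P\cap N=\frat P$, and applying Tate's theorem to the preimage $L$ of the $p'$-Hall subgroup of $G/N$.
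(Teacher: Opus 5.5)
Your proof is correct. It differs from the paper's mainly in how you produce the normal subgroup to which Tate's theorem is applied.

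The paper works with $\mathbf{E}^p(G)$, the smallest normal subgroup with elementary abelian $p$-quotient. It first replaces each extension $\hat\lambda$ by its $p$-part, so that $o(\hat\lambda)=o(\lambda)=p$. It then identifies $\mathbf{E}^p(G)$ with the intersection of the kernels of the linear characters of $G$ of order dividing $p$. From this it gets $P\cap\mathbf{E}^p(G)=\Phi(P)$, and it concludes by citing Problem (6.20) of \cite{I06}, which is a Tate-type criterion.

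You take arbitrary extensions instead, so $G/N$ is only known to be abelian. You remove its $p'$-part by passing to the preimage $L$ of its Hall $p'$-subgroup. Then $P\cap L=\Phi(P)$ and $|G:L|$ is a power of $p$, and the Tate theorem already stated in the preliminaries finishes the argument.

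The two proofs actually converge on the same subgroup. Since $G/L\cong PN/N\cong P/\Phi(P)$ is elementary abelian, $\mathbf{E}^p(G)\subseteq L$. Comparing indices then gives $L=\mathbf{E}^p(G)$.

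What your route gains is that you need no control over the orders of the extensions and no kernel description of $\mathbf{E}^p(G)$. The lemma also becomes self-contained apart from the Tate theorem recorded earlier in the paper. The paper's route yields, as a by-product, that restriction is a bijection from the linear characters of $G$ of order dividing $p$ onto $\mathrm{Irr}(P/\Phi(P))$, although the lemma does not use this.

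As you note yourself, the application of Tate's theorem to $N$ in Step 1 and the whole of Step 2 can be deleted. Only the definition of $N$ and the equality $P\cap N=\Phi(P)$ are needed.
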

\begin{proof}
    Let $\textbf{E}^p(G)$ be the unique smallest normal subgroup of $G$ such that $G/\textbf{E}^p(G)$ is an elementary abelian $p$-group. By Problem (6.20) of \cite{I06} it suffices to check that $\Phi(P)=\textbf{E}^p(G)\cap P$. Let $X$ be the set of linear characters of $G$ of order dividing $p$. We check that
    \[\textbf{E}^p(G)=\bigcap_{\chi\in X}\ker\chi\]
Since $\Irr{G/\textbf{E}^p(G)}\subseteq X$, one inclusion is clear, and the other one follows since $G/\bigcap_{\chi\in X}\ker\chi$ embeds into the $p$-elementary abelian group $\prod_{\chi\in X}G/\ker\chi$. Now we see that the restriction map $X\to\Irr{P/\frat P}$ is a bijection. Obviously it is well-defined. If $\lambda\in\Irr{P/\frat P}$, then by hypothesis there is $\hat\lambda\in\Irr G$ extending $G$, which we may assume to be of $p$-power order. Therefore $G=P\ker{\hat\lambda}$, and hence $\hat\lambda$ is the corresponding extension of $\lambda$ under the isomorphism $P\ker{\hat\lambda}/\ker{\hat\lambda}\cong P/\ker\lambda$, yielding $o(\hat\lambda)=o(\lambda)$. If $\alpha,\beta\in X$ are such that $\alpha_P=\beta_P$, then $P\subseteq\ker{\alpha\inv\beta}$. However, since $o(\alpha\inv\beta)$ is a $p$-number it follows that $o(\alpha\inv\beta)=1$. Thus,
\[\textbf{E}^p(G)\cap P=\bigcap_{\chi\in X}\ker\chi\cap P=\bigcap_{\chi\in X}\ker{\chi_P}=\bigcap_{\lambda\in\Irr{P/\frat P}}\ker\lambda=\frat P.\]
\end{proof}
Notice that from the previous lemma it easily follows that if $H$ is a nilpotent Hall $\pi$-subgroup of $G$ and every linear character of $H$ of square-free order extends to $G$, then $G$ has a normal $\pi$-complement.
\section{Proofs}
\begin{lem}
    Let $G$ be $p$-solvable, let $P\in\Syl p G$ and let $n$ be a positive integer. Then the subgroup
    \[L=\bigcap_{\substack{\chi\in\Irra{p',\mathbb Q_{p^n}}{\B p G}\\\chi(1)>1}}\ker\chi\]
    is $p$-nilpotent with normal $p$-complement $\OO{p'}G$.
\end{lem}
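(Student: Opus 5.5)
The plan is to reduce modulo $\OO{p'}G$ and then show that, in the reduced situation, $L$ is a $p$-group. The reduction is clean; the second step is where the real work is, and I am least sure the approach given there closes.

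\emph{Reduction to $\OO{p'}G=1$.} Write $N=\OO{p'}G$. By Lemma~\ref{interKernel}, $N\subseteq\ker\chi$ for every $\chi\in\B pG$, so $N\subseteq L$. Passing to $\overline G=G/N$, Lemma~\ref{quoPre} identifies $\B pG$ restricted to characters containing $N$ in their kernel with $\B p{\overline G}$. This identification preserves degrees and fields of values, so the characters in the defining intersection of $L$ correspond exactly to those defining $\overline L=L/N$ in $\overline G$. Since $\OO{p'}{\overline G}=1$, it suffices to prove the following: if $\OO{p'}G=1$, then $L$ is a $p$-group. Indeed, $L/N$ being a $p$-group together with $N$ a normal $p'$-subgroup makes $N$ the normal $p$-complement of $L$.

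\emph{Producing characters of $B_p(G)$ with controlled kernels.} Assume $\OO{p'}G=1$ and suppose $L$ is not a $p$-group. Since $\OO{p'}L\subseteq\OO{p'}G=1$ and $L$ is $p$-solvable, Hall--Higman 1.2.3 gives $\C L{\OO pL}\subseteq\OO pL$ with $\OO pL>1$. Set $U=\OO pL$ and $V=\OO{p'}{L/U}$; we may take $V>1$ because $L$ is not a $p$-group. I then want a character $\chi\in\Irra{p',\mathbb Q_{p^n}}{\B pG}$ of degree $>1$ that does not contain $L$ in its kernel.

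\begin{itemize}
\item[(1)] Source of such characters: Theorem~\ref{isFong} and Lemma~\ref{fongfield}. Every linear $\lambda\in\Irr P$ of order dividing $p^n$ is quasiprimitive, hence a Fong character of some $\chi_\lambda\in\B pG$. By Theorem~\ref{fong}, $\chi_\lambda(1)_p=1$, and $\mathbb Q(\chi_\lambda)\subseteq\mathbb Q(\lambda)\subseteq\mathbb Q_{p^n}$. So $\chi_\lambda\in\Irra{p',\mathbb Q_{p^n}}{\B pG}$, and by Theorem~\ref{fong}(a) $\lambda$ occurs in $(\chi_\lambda)_P$ with multiplicity one.
\item[(2)] Handling the linear members. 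The linear characters in $\B pG$ are exactly those with $\chi_K=1_K$ by Theorem~\ref{centnav}, i.e. those of $p$-power order. Their kernels all contain $\O pG$.
\item[(3)] Forcing a nonlinear $\chi$ that sees $L$. Choose $\lambda$ nontrivial on $P\cap U$, or more generally on $P\cap L$, and not extendible to $G$. Then $\chi_\lambda$ must be nonlinear, and $L\not\subseteq\ker{\chi_\lambda}$ because $(\chi_\lambda)_{P\cap L}$ contains $\lambda_{P\cap L}\neq 1$. This contradicts the definition of $L$ as soon as $\lambda$ can be taken nontrivial on $P\cap L$.
\end{itemize}

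\emph{Main obstacle.} The crux is step (3): showing that $P\cap L$ is not contained in the kernels of all the linear characters of $P$ of order dividing $p^n$ whose $\chi_\lambda$ is nonlinear. Equivalently, one must rule out that $P\cap L\subseteq\frat P$ while every relevant $\lambda$ is forced to extend. My first attempt would be to argue by contradiction using Lemma~\ref{tatefrat} and Tate's theorem. If every $\lambda\in\Irr{P/\frat P}$ nontrivial on $P\cap L$ yielded a linear $\chi_\lambda$, so that $\lambda$ extends to $G$, or if $P\cap L\subseteq\frat P$, then $L$ would have a normal $p$-complement, which is $\OO{p'}L=1$. In that case $L$ would be a $p$-group, contrary to assumption. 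Making this dichotomy precise, in particular verifying that the Fong characters of $\chi$ fixed by $L$ behave well under restriction to $P\cap L$, is the step I expect to require the most care.
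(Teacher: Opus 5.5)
Your reduction modulo $\OO{p'}G$ and your step (1) are correct, although the Hall--Higman subgroups $U$, $V$ are never used. The proof stops exactly at the crux, however. Step (3) requires a $\lambda$ that is nontrivial on $P\cap L$ and has $\chi_\lambda$ nonlinear, and nothing you wrote produces one. Your fallback is asserted, not proved: it says that if every $\lambda\in\Irr{P/\frat P}$ nontrivial on $P\cap L$ has linear $\chi_\lambda$, then $L$ is $p$-nilpotent. But Lemma~\ref{tatefrat} needs \emph{every} linear character of $P$ of order $p$ to extend to $G$. Your hypothesis says nothing about the $\lambda$ that are trivial on $P\cap L$. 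Your worry about restricting Fong characters to $P\cap L$ is not an issue: $L\subseteq\ker{\chi_\lambda}$ gives $P\cap L\subseteq\ker\lambda$ directly, since $\lambda$ is a constituent of $(\chi_\lambda)_P$.

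The missing idea, which the paper uses, is to show that $L$ also lies in the kernel of every \emph{linear} member of $\Irra{p',\mathbb Q_{p^n}}{\B pG}$.
\begin{itemize}
\item If the set has no nonlinear member, every $\chi_\lambda$ is linear, so every $\lambda$ extends, and Lemma~\ref{tatefrat} makes $L=G$ $p$-nilpotent.
\item Otherwise fix a nonlinear member $\psi$. A linear member $\alpha$ is trivial on a $p$-complement $K$ (your step (2)). So $\alpha\psi$ is irreducible, nonlinear, of $p'$-degree, with values in $\mathbb Q_{p^n}$, and $[(\alpha\psi)_K,1_K]=[\psi_K,1_K]>0$. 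By Theorem~\ref{centnav} it lies in the set, so $L\subseteq\ker{\alpha\psi}\cap\ker\psi\subseteq\ker\alpha$.
\end{itemize}
Hence $L\subseteq\ker{\chi_\lambda}$ for every linear $\lambda\in\Irr P$ of order dividing $p^n$. This gives $P\cap L\subseteq\fratt nP\subseteq\frat P$, and Tate's theorem yields $p$-nilpotence.

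Alternatively, your own dichotomy can be closed with one observation. Suppose $P\cap L\not\subseteq\frat P$. The $\lambda\in\Irr{P/\frat P}$ nontrivial on $P\cap L$ form the complement of a proper subgroup of $\Irr{P/\frat P}$, so they generate it. Products of extendible linear characters are extendible, so all of $\Irr{P/\frat P}$ extends, and Lemma~\ref{tatefrat} applies.
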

\begin{proof}
Let $\lambda\in\Lin P$ be of order at most $p^n$. By Theorem \eqref{isFong} we have that $\lambda$ is a Fong character of some $\alpha_\lambda\in\Irra{p'}{\B p G}$. Using now Lemma \eqref{fongfield} it follows that $\Q(\alpha_\lambda)\subseteq\Q(\lambda)=\mathbb Q_{o(\lambda)}$, and hence $\alpha_\lambda\in\Irra{p',\mathbb Q_{p^n}}{\B p G}$. If $\Irra{p',\mathbb Q_{p^n}}{\B p G}\subseteq\Lin G$, then $\lambda$ exends to $G$, and applying now Lemma \eqref{tatefrat} we obtain that $L=G$ is $p$-nilpotent. Thus we may assume that there is some nonlinear $\psi\in\Irra{p',\mathbb Q_{p^n}}{\B p G}$.
If $\alpha_\lambda(1)>1$, then $L\subseteq\ker{\alpha_\lambda}$. Suppose that $\alpha_\lambda$ is linear and let $K$ be a $p$-complement of $G$. As $\Irra{p'}{\B p G}=\Irra{p'}{\Phi_{1_G}}$ and $\Phi_{1_G}=(1_K)^G$, we have that $K\subseteq\ker{\alpha_\lambda}$. Therefore, $[(\alpha_\lambda\psi)_K,1_K]=[\psi_K,1_K]>0$, and hence by Theorem \eqref{centnav} it follows that $\alpha_\lambda\psi\in\Irra{p',\mathbb Q_{p^n}}{\B p G}$. As $L\subseteq\ker{\alpha_\lambda\psi}$, it holds that $L\subseteq\ker{\alpha_\lambda}$. Therefore $L\cap P\subseteq\ker\lambda$, yielding $L\cap P\subseteq\fratt n P$. Using now Tate's theorem we have that $L$ has a normal $p$-complement.
    
    By Lemma \eqref{interKernel} we have that $\OO{p'}G\subseteq L$. Since $L$ is $p$-nilpotent then its normal $p$-complement $N$ is contained in $\OO{p'}G$, and hence necessarily $N=\OO{p'}G$.
\end{proof}
From the previous proof, we obtain the following.
\begin{lem}\label{berkobp}
    Let $G$ be $p$-solvable and let $P\in\Syl p G$. Then the subgroup
    \[L=\bigcap_{\chi\in\Irra{p',\mathbb Q_{p^n}}{\B p G}}\ker\chi\]
    is $p$-nilpotent with normal $p$-complement $\OO{p'}G$ and $P\cap L\subseteq\fratt n P$.
\end{lem}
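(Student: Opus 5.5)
Write $L$ for the intersection in the statement and $L_1$ for the subgroup of the preceding lemma, where only nonlinear $\chi$ are allowed. Then $L\subseteq L_1$ and $L=L_1\cap\bigcap\ker\chi$, the second intersection running over the linear $\chi\in\Irra{p',\mathbb Q_{p^n}}{\B p G}$. The plan is to deduce the statement from the preceding lemma and its proof, with one extra step.

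First, $\OO{p'}G\subseteq L$ by Lemma \eqref{interKernel}, since every member of $\Irra{p',\mathbb Q_{p^n}}{\B p G}$ lies in $\B pG$. Next, the preceding lemma gives that $L_1$ has normal $p$-complement $\OO{p'}G$. Hence $L\subseteq L_1$ is also $p$-nilpotent. Its normal $p$-complement is $L\cap\OO{p'}G$, and this equals $\OO{p'}G$ by the first inclusion. So the first two claims follow once the preceding lemma applies.

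The remaining claim is $P\cap L\subseteq\fratt nP$. I will rerun the argument in the proof of the preceding lemma. Take $\lambda\in\Lin P$ of order at most $p^n$. By Theorem \eqref{isFong} it is a Fong character of some $\alpha_\lambda\in\B pG$. By Theorem \eqref{fong}, $\alpha_\lambda(1)_p=\lambda(1)=1$, so $\alpha_\lambda$ has $p'$-degree. By Lemma \eqref{fongfield}, $\Q(\alpha_\lambda)\subseteq\Q(\lambda)\subseteq\mathbb Q_{p^n}$. Thus $\alpha_\lambda$ is one of the characters being intersected, and so $L\subseteq\ker{\alpha_\lambda}$. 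Since $[(\alpha_\lambda)_P,\lambda]=1$ and $\lambda$ is linear, every $x\in P\cap L$ satisfies $\alpha_\lambda(x)=\alpha_\lambda(1)$. It follows that $x$ acts trivially in every constituent of $(\alpha_\lambda)_P$, in particular in $\lambda$, so $x\in\ker\lambda$.

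Therefore $P\cap L\subseteq\bigcap_\lambda\ker\lambda$, the intersection over the linear characters of $P$ of order at most $p^n$. These are exactly $\Irr{P/\fratt nP}$, so the intersection of their kernels is $\fratt nP$. This proof avoids the case split on linear versus nonlinear $\alpha_\lambda$ from the preceding lemma, which is why the statement can be read off from that proof. No step is a real obstacle. The only point needing care is that $\alpha_\lambda$ has $p'$-degree; this follows from Theorem \eqref{fong} a), which gives $\alpha_\lambda(1)_p=\lambda(1)=1$.
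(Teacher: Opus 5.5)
Your proposal is correct and is essentially the paper's argument. The paper gives no separate proof and only says the lemma follows from the proof of the preceding lemma; your observation that each $\alpha_\lambda$ is itself one of the intersected characters, so that $P\cap L\subseteq\ker\lambda$ holds with no linear/nonlinear case split, is exactly what makes that work. The only difference is cosmetic: you get $p$-nilpotence from $L\subseteq L_1$ and the preceding lemma, whereas one could instead apply Tate's theorem directly to $P\cap L\subseteq\fratt nP\subseteq\frat P$ and then identify the complement as $\OO{p'}G$ via Lemma \eqref{interKernel}.
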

\begin{cor}\label{intbpprime}
    Let $G$ be $p$-solvable, let $P\in\Syl p G$ and let $n$ be a positive integer. Then
    \[\bigcap_{\psi\in\Irra{p',\mathbb Q_{p^n}}{\B p{\N G P}}}\ker\psi=\fratt n P\OO{p'}{\N G P}.\]
\end{cor}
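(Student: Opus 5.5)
We apply Lemma \ref{berkobp} to the group $N=\N G P$, which is $p$-solvable and has $P$ as a normal Sylow $p$-subgroup. Write $L$ for the intersection on the left-hand side. By Lemma \ref{berkobp}, $L$ is $p$-nilpotent with normal $p$-complement $\OO{p'}{N}$, and $P\cap L\subseteq\fratt nP$. Since $P\unlhd N$, the subgroup $L$ is the product of $P\cap L$ and $\OO{p'}N$, so $L\subseteq\fratt nP\OO{p'}N$. It therefore suffices to prove the reverse inclusion, namely that every $\psi\in\Irra{p',\mathbb Q_{p^n}}{\B p N}$ contains both $\OO{p'}N$ and $\fratt nP$ in its kernel.

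The inclusion $\OO{p'}N\subseteq\ker\psi$ is immediate from Lemma \ref{interKernel}, because $\psi\in\B pN$. By Lemma \ref{quoPre}, we may then pass to $\bar N=N/\OO{p'}N$. In $\bar N$, the normal Sylow $\bar P\cong P$ satisfies $\C{\bar N}{\bar P}\subseteq\bar P$ by Hall--Higman 1.2.3.

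Now let $\psi$ have $p'$-degree and lie over $\bar P\unlhd\bar N$. Clifford theory shows that $\psi_{\bar P}$ is a sum of conjugates of a linear character $\lambda$, since $\psi(1)$ is prime to $p$. Moreover, $\lambda$ is a Fong character for $\psi$, because $\bar P$ is the Hall $p$-subgroup. The remaining task is to show that $\lambda$ has order at most $p^n$; then $\fratt nP\subseteq\ker\lambda$, and normality of $\fratt nP$ gives $\fratt nP\subseteq\ker\psi$.

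This order bound is the main step, and it is where the field condition $\Q(\psi)\subseteq\Q_{p^n}$ enters. I would first use Theorem \ref{centnav} with a $p$-complement $K$ of $\bar N$: $\psi$ lies in $\B p{\bar N}$ exactly when $[\psi_K,1_K]>0$. Hence $\lambda$ can be chosen $K$-invariant, so $\lambda$ extends to $\bar PK=\bar N$ (coprime action, $\bar P\unlhd\bar N$), and Gallagher's theorem applies. The $K$-invariant constituents of $\psi_{\bar P}$ should form a single $\N{\bar N}K$-orbit by Glauberman correspondence. Using $\C{\bar N}{\bar P}\subseteq\bar P$, or a direct argument, I would conclude that $\psi$ is determined by $\lambda$.

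With that in hand, any Galois automorphism $\sigma$ fixing $\Q_{p^n}$ fixes $\psi$. It therefore sends $\lambda$ to a $K$-invariant constituent of $\psi_{\bar P}$, so $\lambda^\sigma=\lambda^x$ with $x\in\C{\bar N}K$, which normalizes $\bar P$. Here I would need $\lambda^\sigma=\lambda$, perhaps by noting that $\C{\bar P}K$-conjugation acts trivially on linear characters of $\bar P$, and that $\C{\bar N}K=\C{\bar P}K\times(\text{part of }K)$ fixes $\lambda$. Granting this, $\lambda^\sigma=\lambda$ for all $\sigma\in\Gal(\Q_{|P|}/\Q_{p^n})$. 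This gives $\Q(\lambda)\subseteq\Q_{p^n}$, and therefore $o(\lambda)\le p^n$ (for $p=2$, with a slight care for $n=1$). This Galois-invariance step is the one I expect to be delicate.
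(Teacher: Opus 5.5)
Your first paragraph is correct and matches the paper: Lemma \ref{berkobp} applied to $N=\N GP$ gives $L=(P\cap L)\OO{p'}N\subseteq\fratt nP\OO{p'}N$, and $\OO{p'}N\subseteq\ker\psi$ follows from Lemma \ref{interKernel}. The gap is in your proof that $\fratt nP\subseteq\ker\psi$. The step ``hence $\lambda$ can be chosen $K$-invariant'' is false. Since $\bar P$ acts trivially on $\Irr{\bar P}$, the complement $K$ acts \emph{transitively} on the set $\Omega$ of irreducible constituents of $\psi_{\bar P}$. So a $K$-invariant constituent exists only when $\psi_{\bar P}$ is homogeneous. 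The condition $[\psi_K,1_K]>0$ cannot force this: by Mackey's formula, with $\bar N=TK$ and $T$ the stabilizer of $\lambda$, it is only a condition on $T\cap K$.

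Concretely, take $G=N=S_3$, $p=3$, $n=1$, $P=A_3$. The degree-$2$ character $\psi$ is rational and $[\psi_K,1_K]=1$, so $\psi\in\Irra{3',\mathbb Q_3}{\B 3{S_3}}$ by Theorem \ref{centnav}. Yet $\psi_{A_3}=\lambda+\bar\lambda$, and a transposition swaps the two constituents. Everything downstream rests on this false step: the extension of $\lambda$ to $\bar N$, Gallagher's theorem, the Glauberman orbit statement, and $\lambda^\sigma=\lambda^x$ with $x\in\C{\bar N}K$. The decisive equality $\lambda^\sigma=\lambda$ is in any case only granted, not proved.

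The missing idea is an elementary fixed-point count. It needs neither membership in $\B pN$, nor the passage to $N/\OO{p'}N$, nor Theorem \ref{centnav}. Assume $p^n\le|P|$; then $\mathcal G={\rm Gal}(\mathbb Q_{|P|}/\mathbb Q_{p^n})$ is a $p$-group. It fixes $\psi_P$ because $\mathbb Q(\psi)\subseteq\mathbb Q_{p^n}$, so it permutes the set $\Omega$ of irreducible constituents of $\psi_P$. By Clifford's theorem $|\Omega|$ divides $\psi(1)$, hence is prime to $p$, so some $\lambda\in\Omega$ is $\mathcal G$-fixed. Then $\mathbb Q_{o(\lambda)}=\mathbb Q(\lambda)\subseteq\mathbb Q_{p^n}$ forces $o(\lambda)\mid p^n$; this also holds when $p=2$, $n=1$, where $\mathbb Q_2=\mathbb Q$. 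Hence $\fratt nP\subseteq\ker\lambda$. Every member of $\Omega$ is $N$-conjugate to $\lambda$ and $\fratt nP$ is characteristic in $P$, so $\fratt nP\subseteq\ker{\psi_P}\subseteq\ker\psi$. You never need $\lambda^\sigma=\lambda$ for a prescribed constituent, only the existence of one Galois-fixed constituent; this is exactly the paper's argument.
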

\begin{proof}
    We claim that $\Irra{p',\mathbb Q_{p^n}}{\N G P}\subseteq \Irr{\N G P/\fratt n P}$. Suppose that $\psi\in\Irra{p',\mathbb Q_{p^n}}{\N G P}$. Notice that we may assume that $p^n\leq \card P$, and let $\mathcal G={\rm Gal}(\mathbb Q_{\card P}/\mathbb Q_{p^n})$. Then $\mathcal G$ is a $p$-group acting on the constituents of $\psi_P$. By Clifford's theorem we have that the number of constituents of $\psi_P$ is a $p'$-number, and hence there is some $\mathcal G$-invariant constituent $\lambda$ of $\psi_P$. Since $\mathbb Q_{\card P}/\Q_{p^n}$ is Galois, we have $\mathbb Q_{o(\lambda)}=\Q(\lambda)\subseteq\Q_{p^n}$, and hence $o(\lambda)$ divides $p^n$. Thus, $\fratt n P\subseteq\ker\lambda$. Also, since $\Q(\lambda)=\Q(\lambda^x)$ for each $x\in\N G P$, by Clifford's theorem we have that $\fratt n P\subseteq\ker{\psi_P}\subseteq\ker\psi$, yielding the claim.

    Now, write $L=\bigcap_{\psi\in\Irra{p',\Q_{p^n}}{\B p{\N G P}}}\ker\psi$. Then,
    \[\fratt n P\subseteq\bigcap_{\psi\in\Irra{p',\Q_{p^n}}{\N G P}}\ker\psi\subseteq\bigcap_{\psi\in\Irra{p',\Q_{p^n}}{\B p{\N G P}}}\ker\psi=L.\]
    By Lemma \eqref{berkobp} we have that $\OO{p'}{\N G P}$ is a normal $p$-complement of $L$ and $P\cap L\subseteq\fratt n P$, yielding the result.
\end{proof}
\begin{thm}\label{uniquepconst}
    Let $G$ be $\pi$-separable, let $H$ be a Hall $\pi$-subgroup of $G$ and let $\chi\in\Irra{\pi'}{\B \pi G}$. Then $\chi_{\N GH}$ has a unique $\pi'$-degree constituent $\chi^*$, which lies in $\Irra{\pi'}{\B\pi{\N GH}}$. Also, the map $\Irra{\pi'}{\B\pi G}\to\Irra{\pi'}{\B\pi{\N G H}}$ defined via $\chi\mapsto\chi^*$ is a bijection preserving fields of values.
\end{thm}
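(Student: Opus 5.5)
The plan is to go through the Fong characters of the Hall subgroup $H$, which is a Hall $\pi$-subgroup of both $G$ and $N=\N GH$. Let $\chi\in\Irra{\pi'}{\B\pi G}$. By Theorem \eqref{fong}, a Fong character $\alpha$ of $\chi$ satisfies $\alpha(1)=\chi(1)_\pi=1$, so $\alpha$ is linear and $[\chi_H,\alpha]=1$. Since $\chi$ is $G$-invariant, all $N$-conjugates $\alpha^x$ with $x\in N$ are also Fong characters of $\chi$. The first step is to show that the $N$-orbit of $\alpha$ is exactly the set of linear constituents of $\chi_H$. This would follow from the uniqueness of Fong characters up to $\N GH$-conjugacy, which is a standard part of Isaacs' theory (Theorem (5.1)/(3.4) of \cite{I18}). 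Conversely, each linear constituent of $\chi_H$ is a minimal-degree constituent and hence a Fong character.

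Next I will identify the $\pi'$-degree constituents of $\chi_N$. By Clifford theory over $H\unlhd N$, an irreducible $\psi$ of $N$ has $\pi'$-degree precisely when it lies over a linear $\beta\in\Irr H$ and $\psi_H$ is a sum of $N$-conjugates of $\beta$. Here $\psi(1)/\beta(1)$ divides $|N:H|$, which is a $\pi'$-number, and conversely $\pi'$-degree forces $\beta(1)=1$. So a $\pi'$-degree constituent $\psi$ of $\chi_N$ lies over some linear constituent of $\chi_H$, and by the first step this constituent is conjugate to $\alpha$. Since $[\chi_H,\alpha]=1$, the multiplicity of the orbit sum $\sum_{x}\alpha^x$ (over $N/N_\alpha$) in $\chi_H$ is $1$. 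Hence exactly one constituent $\chi^*$ of $\chi_N$ lies over $\alpha$, and it occurs with multiplicity one; this is uniqueness. Moreover $\alpha$ is a constituent of $\chi^*_H$, so $\alpha$ is a Fong character for $\chi^*$ in $N$, viewed through the first step applied to $N$. To place $\chi^*$ in $\B\pi N$, I will use Theorem \eqref{isFong} in $N$: $\alpha$ is linear, hence quasiprimitive, hence a Fong character of some $\theta\in\B\pi N$. Then $\theta(1)_\pi=1$ and $\theta$ lies over $\alpha$. Since $H\unlhd N$, the restriction $\theta_H$ lies in the orbit of $\alpha$. Theorem \eqref{fong}(b) says distinct members of $\B\pi N$ have no common Fong character, so I need $\theta=\chi^*$. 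This is the main obstacle: several irreducibles of $N$ lie over $\alpha$, with $\pi'$ degrees, and I must select the right one. My first attempt is to use that $\alpha$ appears with multiplicity one in the restriction to $H$ of the unique $\B\pi$-member, and then compare $\chi^*$ via the canonical construction. Alternatively, Theorem \eqref{centnav} in $N$ gives a cleaner route: $\chi^*\in\B\pi N$ if and only if $[\chi^*_K,1_K]>0$ for a $\pi$-complement $K$ of $N$. I would try to derive this from $[\chi_{K_0},1]>0$ for a complement $K_0\supseteq K$ of $G$, choosing complements compatibly and using that $\chi^*$ is the only $\pi'$-degree piece.

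For bijectivity, the map $\chi\mapsto\alpha$ gives an injection from $\Irra{\pi'}{\B\pi G}$ to the $N$-orbits of linear characters of $H$, by Theorem \eqref{fong}(b). It is a surjection by Theorem \eqref{isFong}, since every linear character is quasiprimitive. The same applies in $N$, and $\chi\mapsto\chi^*$ respects these parametrizations, so it is a bijection. For fields of values, $\sigma\in{\rm Gal}$ commutes with restriction, and uniqueness gives $(\chi^\sigma)^*=(\chi^*)^\sigma$. Hence $\Q(\chi)=\Q(\chi^*)$, because $\chi\mapsto\chi^*$ is injective.
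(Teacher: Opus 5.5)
Your proposal has two genuine gaps, both in the steps that carry the content of the theorem. The paper itself does not prove the statement; it quotes Theorem (2.4) of \cite{CN08}.

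\textbf{The membership $\chi^*\in\B\pi{\N GH}$ is left open.} You call it the main obstacle, and everything after it depends on it. In particular, ``$\alpha$ is a Fong character of $\chi^*$'' only makes sense once $\chi^*\in\B\pi{\N GH}$. So the claim that $\chi\mapsto\chi^*$ respects the parametrizations, and hence bijectivity, are unsupported.

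The route you sketch through Theorem \eqref{centnav} does not work as stated. From $[\chi_{K_0},1_{K_0}]>0$ you only get $[\chi_K,1_K]>0$. Characters of $N=\N GH$ whose degree is divisible by a prime in $\pi$ can contain $1_K$ on restriction to $K$. Knowing that $\chi^*$ is the only $\pi'$-degree constituent of $\chi_N$ says nothing about where $1_K$ sits.

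The missing idea is to restrict the whole character $\Phi=(1_{K_0})^G$.
\begin{enumerate}
\item Since $G=K_0H=K_0N$ and $K_0\cap N=K$, Mackey gives $\Phi_H=\rho_H$ (the regular character) and $\Phi_N=(1_K)^N$.
\item Because $[\rho_H,\alpha]=1$ and $\chi$ is a constituent of $\Phi$, $\chi$ is the only constituent of $\Phi$ whose restriction to $H$ contains $\alpha$.
\item Hence the unique constituent of $(1_K)^N=\Phi_N$ lying over $\alpha$ is a constituent of $\chi_N$, so it equals $\chi^*$.
\item Then $[\chi^*_K,1_K]>0$, and Theorem \eqref{centnav} in $N$ gives $\chi^*\in\B\pi N$.
\end{enumerate}

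\textbf{Your first step rests on a claim that is false in general.} Theorem \eqref{fong} (that is, (3.4) and (5.1) of \cite{I18}) says nothing about conjugacy, and Fong characters of a $\B\pi$-character need not be $\N GH$-conjugate.

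For example, take $\pi=\{2\}$, let $M$ be the regular $\FF_2S_4$-module, and set $G=M\rtimes S_4$. Choose $\theta\in\Irr{M}$ with stabilizer $\langle(12)(34)\rangle$ in $S_4$. Let $\chi$ be induced from the extension of $\theta$ to $M\rtimes\langle(12)(34)\rangle$ that is trivial on $\langle(12)(34)\rangle$. The nucleus of $\chi$ is a character of the $2$-group $M\rtimes V$, where $V$ is the Klein four subgroup, so $\chi\in\B 2G$ and $\chi(1)=12$. For the self-normalizing Sylow $2$-subgroup $P=M\rtimes D_8$, $\chi_P$ is the sum of three distinct irreducible characters of degree $4=\chi(1)_2$. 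These are three Fong characters lying in three different $\N GP$-orbits.

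You only need the $\pi'$-degree case, which is true, but it is a real part of the theorem. The argument above applies to every linear constituent $\lambda$ of $\chi_H$ and produces a $\pi'$-degree member of $\B\pi N$ over $\lambda$ inside $\chi_N$. So uniqueness of $\chi^*$ is equivalent to all linear constituents of $\chi_H$ being $N$-conjugate. Proving this needs further input, for example a nucleus $(W,\gamma)$ of $\chi$ (here $\gamma$ is linear of $\pi$-order and $|G:W|$ is a $\pi'$-number), or the Centella--Navarro result itself.

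A minor point: your fields-of-values argument also uses, without saying so, that $\B\pi G$ is stable under Galois action.
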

\begin{proof}
    See Theorem (2.4) of \cite{CN08} and the subsequent discussion.
\end{proof}
Notice that the previous theorem implies that condition a) of Theorem A is equivalent to that every $\chi\in\Irra{p',\mathbb Q_p}{\B p G}$ restricts irreducibly to $\N G P$.
\begin{thm}\label{largestbpprime}
    Let $G$ be $p$-solvable, let $P\in\Syl p G$ and let $n$ be a positive integer. Then $K=\bigcap_{\chi\in\Irra{p',\mathbb Q_{p^n}}{\B p G}}\ker\chi$ is the unique largest normal subgroup $K$ of $G$ such that $\N K P\subseteq\fratt n P\OO{p'}G$.
\end{thm}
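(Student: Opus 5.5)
Write $K=\bigcap_{\chi\in\Irra{p',\mathbb Q_{p^n}}{\B p G}}\ker\chi$. The plan is to prove two things: first that $K$ itself satisfies $\N K P\subseteq\fratt n P\OO{p'}G$, and then that every normal subgroup $M$ of $G$ with $\N M P\subseteq\fratt nP\OO{p'}G$ lies in $K$. Uniqueness of the largest such subgroup then follows from maximality.

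For the first part, Lemma \eqref{berkobp} says that $K$ is $p$-nilpotent with normal $p$-complement $\OO{p'}G$, and that $P\cap K\subseteq\fratt nP$. Hence $K=(P\cap K)\OO{p'}G$. By Dedekind's law,
\[
\N KP\subseteq (P\cap K)\,\N{\OO{p'}G}P\subseteq \fratt nP\,\OO{p'}G .
\]
This first inclusion is routine.

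For the second part, let $M\unlhd G$ with $\N MP\subseteq\fratt nP\OO{p'}G$. Since $P\cap M\subseteq\N MP$, we get
\[
P\cap M\subseteq P\cap\fratt nP\OO{p'}G=\fratt nP\subseteq\frat P.
\]
By Tate's theorem, $M$ has a normal $p$-complement $M_{p'}=\OO{p'}M\subseteq\OO{p'}G\subseteq K$. So it suffices to prove $P\cap M\subseteq K$, and then $M=(P\cap M)M_{p'}\subseteq K$.

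To show $P\cap M\subseteq K$, I would pass to $\bar G=G/\OO{p'}G$. By Lemma \eqref{quoPre}, the characters in $\Irra{p',\mathbb Q_{p^n}}{\B pG}$ are exactly the inflations of those of $\bar G$. Together with Lemma \eqref{interKernel}, this means we may assume $\OO{p'}G=1$. Then $M$ has trivial normal $p$-complement, so $M=P\cap M$ is a normal $p$-subgroup of $G$ contained in $\fratt nP$.

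Now take $\chi\in\Irra{p',\mathbb Q_{p^n}}{\B pG}$. We must show $M\subseteq\ker\chi$. By Clifford's theorem, $\chi_M$ is a sum of $G$-conjugates of some $\theta\in\Irr M$. Since $\chi(1)$ is a $p'$-number and $M$ is a $p$-group, $\theta$ is linear. To see that $\theta$ is trivial, take a $\mathcal G$-invariant constituent $\lambda$ of $\chi_P$, where $\mathcal G={\rm Gal}(\mathbb Q_{\card P}/\mathbb Q_{p^n})$, exactly as in the proof of Corollary \eqref{intbpprime}. Its degree is a $p'$-number, so it is linear. It lies over some conjugate of $\theta$, because $M\unlhd G$ and $M\subseteq P$. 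As $\lambda$ has order dividing $p^n$, we get $\fratt nP\subseteq\ker\lambda$, so $\lambda_M=1_M$. Since all constituents of $\chi_M$ are $G$-conjugate, $\chi_M$ is a multiple of $1_M$, and $M\subseteq\ker\chi$.

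The main obstacle is making this Galois/Clifford step rigorous: one has to check that a $\mathcal G$-invariant linear constituent of $\chi_P$ exists and that its field of values being in $\mathbb Q_{p^n}$ forces $o(\lambda)\mid p^n$. This is the same argument as in Corollary \eqref{intbpprime} applied with $G$ in place of $\N GP$. It uses that the number of irreducible constituents of $\chi_P$ is coprime to $p$ when $\chi(1)$ is a $p'$-number (all of them are linear, and $\chi(1)$ is their total count with multiplicities; a $p$-group orbit argument gives a fixed point once one takes multiplicities into account).
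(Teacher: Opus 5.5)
Your two-part strategy is sound, and after one repair it gives a correct proof by a route that differs from the paper's.

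\textbf{The faulty step.} You claim, ``exactly as in the proof of Corollary~\ref{intbpprime}'', that all irreducible constituents of $\chi_P$ are linear and that their number is prime to $p$. In that corollary both facts came from Clifford's theorem applied to $P\unlhd\mathbf{N}_G(P)$. Here $P$ is not normal in $G$, and both claims are false.

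For example, take $G=S_4$, $p=2$, $P\cong D_8$, and let $\chi$ be the degree-$3$ constituent of the natural permutation character. Then:
\begin{itemize}
\item $\chi$ is rational of odd degree;
\item $\chi\in\mathrm{B}_2(G)$ by Theorem~\ref{centnav}, since $[\chi_K,1_K]=1$ for $K=\langle(1\,2\,3)\rangle$;
\item yet $\chi_P=\mu+\rho$ with $\mu$ linear and $\rho(1)=2$.
\end{itemize}

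\textbf{The repair.} Count only the linear constituents, with multiplicity. Every nonlinear irreducible character of $P$ has degree divisible by $p$, so
\[
\sum_{\lambda\in\mathrm{Lin}(P)}[\chi_P,\lambda]\equiv\chi(1)\not\equiv 0\pmod p .
\]
Since $\chi_P$ has values in $\mathbb{Q}_{p^n}$, each $\sigma\in\mathcal G$ satisfies $[\chi_P,\lambda^\sigma]=[\chi_P,\lambda]$. Thus the $p$-group $\mathcal G$ permutes $\mathrm{Lin}(P)$ preserving these multiplicities. Nontrivial orbits contribute multiples of $p$ to the sum, so some $\mathcal G$-fixed linear $\lambda$ occurs in $\chi_P$. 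From there your argument goes through unchanged: $\Phi^n(P)\subseteq\ker\lambda$, so $1_M$ is a constituent of $\chi_M$, and Clifford in $G$ gives $M\subseteq\ker\chi$.

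\textbf{Comparison with the paper.} For the inclusion $\mathbf{N}_K(P)\subseteq\Phi^n(P)\mathbf{O}_{p'}(G)$, the paper uses the field-preserving Centella--Navarro bijection (Theorem~\ref{uniquepconst}), Corollary~\ref{intbpprime} and Lemma~\ref{thompsonsep}. You instead read off the stronger fact $K\subseteq\Phi^n(P)\mathbf{O}_{p'}(G)$ directly from Lemma~\ref{berkobp}.

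For maximality, both arguments reduce to a normal $p$-subgroup contained in $\Phi^n(P)$. The paper then replaces $\chi$ by the unique $p'$-degree constituent $\chi^*$ of $\chi_{\mathbf{N}_G(P)}$. This $\chi^*$ still has values in $\mathbb{Q}_{p^n}$, and $P$ is normal in $\mathbf{N}_G(P)$, so Corollary~\ref{intbpprime} gives $\Phi^n(P)\subseteq\ker\chi^*$. That is exactly how the paper sidesteps the non-normality problem you ran into.

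Your repaired route avoids the Centella--Navarro machinery altogether and proves more. Every $p'$-degree $\chi\in\mathrm{Irr}(G)$ with $\mathbb{Q}(\chi)\subseteq\mathbb{Q}_{p^n}$ has in its kernel every normal $p$-subgroup of $G$ lying in $\Phi^n(P)$. Membership in $\mathrm{B}_p(G)$ is needed only to get $\mathbf{O}_{p'}(G)\subseteq\ker\chi$. It also establishes the existence of a unique largest such subgroup directly, rather than citing \cite{IN05}.
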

\begin{proof}
By Lemma (2.1) of \cite{IN05} there is a unique largest normal subgroup satisfying the property.

For each $\psi\in\Irra{p',\mathbb Q_{p^n}}{\B p{\N G P}}$ let $\chi_{\psi}\in\Irra{p',\mathbb Q_{p^n}}{\B p G}$ be preimage of $\psi$ under the bijection of Theorem \eqref{uniquepconst}. Then, $\{\chi_\psi\}_{\psi\in\Irra{p',\mathbb Q_{p^n}}{\B p{\N G P}}}=\Irra{p',\mathbb Q_{p^n}}{\B p G}$. Hence, by Corollary \eqref{intbpprime} we deduce that
\[K\cap\N G P\subseteq\bigcap_{\psi\in\Irra{p',\mathbb Q_{p^n}}{\B p{\N G P}}}\ker\psi=\fratt n P\OO{p'}{\N G P}.\]
Since $G$ is $p$-solvable, by Lemma \eqref{thompsonsep} we have $\OO{p'}{\N G P}\subseteq\OO{p'}G$, and hence $K$ satisfies the condition.

Now, let $N\unlhd G$ be such that $\N N P\subseteq\fratt n P\OO{p'}{G}$. Intersecting with $P$ we obtain that $P\cap N\subseteq\fratt n P\subseteq\frat P$, and hence by Tate's theorem we have that $N$ is $p$-nilpotent. Let $Q$ be a normal $p$-complement of $N$, and notice that $Q\subseteq\OO{p'}G\subseteq K$. As $Q$ is a normal $p'$-group, then
\[\fratt n{PQ/Q}\cong\fratt n P\cong\fratt n PQ/Q.\]
Write $\fratt n{PQ/Q}=X/Q$ and notice that $PQ/X$ is isomorphic to a direct product of cyclic grups of order at most $p^n$. Then so is $PX/X\cong P/P\cap X$, and hence $\fratt nP\subseteq X$, yielding $\fratt n{PQ/Q}=\fratt n PQ/Q$. Working in $G/Q$ and using Lemma \eqref{quoPre}, we may assume that $N$ is a $p$-group. Now, let $\chi\in\Irra{p',\mathbb Q_{p^n}}{\B p G}$ and let $\chi^*\in\Irra{p',\mathbb Q_{p^n}}{\B p{\N G P}}$ be the unique $p'$-degree constituent of $\chi_{\N G P}$, which exists by Theorem \eqref{uniquepconst}. By Corollary \eqref{intbpprime} it follows that $\fratt n P\subseteq\ker{\chi^*}$. Since $N\subseteq\fratt n P$, by Clifford's theorem it holds that $N\subseteq\ker\chi$, and hence $N\subseteq K$.
\end{proof}
We can now prove Theorem A.
\begin{thm}\label{mainthm}
    Let $G$ be $p$-solvable, let $P\in\Syl p G$ and let $n$ be a positive integer. Write $K_n=\bigcap_{\chi\in\Irra{p',\mathbb Q_{p^n}}{\B p G}}\ker\chi$. Then the following are equivalent.
    \begin{enumerate}[a)]
        \item The restriction map $\Irra{p',\mathbb Q_p}{\B p G}\to \Irra{p',\mathbb Q_p}{\B p{\N G P}}$ is a bijection.
        \item It holds that $G=\N G PK_n$.
    \end{enumerate}
    Moreover, if the previous holds, then $G$ has $p$-length $1$ and \[\N G P\cap K_n=\fratt nP\OO{p'}{\N G P}.\] In general, the relative complement $K_n$ of $\N G P$ is unique.
\end{thm}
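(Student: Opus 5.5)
The plan is to work throughout with $X=\Irra{p',\mathbb Q_{p^n}}{\B p G}$ and $N=\N G P$. I read condition a) with $\mathbb Q_{p^n}$ in place of $\mathbb Q_p$, so that it matches $K_n$; for $n=1$ this is exactly the statement as written. By Theorem \eqref{uniquepconst}, a) is equivalent to $\chi_N=\chi^*$ being irreducible for every $\chi\in X$, and I will use it in that form. Uniqueness of the relative complement $K_n$ is the characterization in Theorem \eqref{largestbpprime}: $K_n$ is the largest normal subgroup $K$ of $G$ with $\N K P\subseteq\fratt nP\OO{p'}G$.

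\textbf{Step 1: a) gives $N\cap K_n=\fratt nP\OO{p'}N$.} Assume a). For $\chi\in X$ we have $\ker{\chi}\cap N=\ker{\chi_N}=\ker{\chi^*}$. Since $\chi\mapsto\chi^*$ is a bijection $X\to\Irra{p',\mathbb Q_{p^n}}{\B p N}$, taking intersections and applying Corollary \eqref{intbpprime} gives
\[N\cap K_n=\bigcap_{\psi\in\Irra{p',\mathbb Q_{p^n}}{\B p N}}\ker\psi=\fratt nP\OO{p'}N.\]
Intersecting with $P$ then yields $P\cap K_n=\fratt nP$.

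\textbf{Step 2: a) implies b).} Pass to $\overline G=G/K_n$. By Lemma \eqref{berkobp}, $K_n$ is $p$-nilpotent with normal $p$-complement $\OO{p'}G$. I first show $\OO{p'}{\overline G}=1$. Let $R$ be the preimage of $\OO{p'}{\overline G}$. Then a Sylow $p$-subgroup of $R$ lies in $K_n$, so $P\cap R=P\cap K_n\subseteq\frat P$. Tate's theorem makes $R$ $p$-nilpotent, and its normal $p$-complement is contained in $\OO{p'}G\subseteq K_n$. Hence $R=(P\cap R)\OO{p'}G\subseteq K_n$. Next, by Step 1 the Sylow subgroup $\overline P\cong P/\fratt nP$ is abelian. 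Lemma \eqref{abhallnormal} with $\pi=\{p\}$ then gives $\overline P=\overline P\,\OO{p'}{\overline G}\unlhd\overline G$. Therefore $\overline G=\N{\overline G}{\overline P}=\overline N$ by the Frattini argument, i.e.\ $G=NK_n$. The same computation gives $p$-length $1$: $G/K_n$ is a $p$-group and $K_n/\OO{p'}G$ is a $p$-group, so $G/\OO{p'}G$ is a $p$-group.

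\textbf{Step 3: b) implies a), and the intersection formula under b).} Assume $G=NK_n$. Every $\chi\in X$ has $K_n\subseteq\ker\chi$, so it is a character of $G/K_n\cong N/(N\cap K_n)$. Hence $\chi_N$ is irreducible, so it equals $\chi^*$, and restriction is the bijection of Theorem \eqref{uniquepconst}. This is a). For the formula:
\begin{enumerate}
\item the inclusion $N\cap K_n\subseteq\fratt nP\OO{p'}N$ is shown inside the proof of Theorem \eqref{largestbpprime};
\item conversely, $\fratt nP\subseteq\ker{\chi^*}=\ker{\chi_N}\subseteq\ker\chi$ for every $\chi\in X$, by Corollary \eqref{intbpprime};
\item and $\OO{p'}N\subseteq\OO{p'}G\subseteq K_n$ by Lemma \eqref{thompsonsep} and Lemma \eqref{interKernel}.
\end{enumerate}

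\textbf{Main obstacle.} I expect the delicate point to be in Step 2: showing $\OO{p'}{G/K_n}=1$ and that $\overline P$ is abelian. Both depend on the identification $P\cap K_n=\fratt nP$, which in turn depends on a) giving irreducible restrictions. I will check the Tate argument for $R$ carefully.
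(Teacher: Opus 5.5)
There is a genuine gap: you have changed the theorem. Condition a) is deliberately stated with $\mathbb Q_p$ for every $n$. The claim is that this single condition is equivalent to $G=\N{G}{P}K_n$ for each $n$, and that it then gives $\N{G}{P}\cap K_n=\fratt{n}{P}\OO{p'}{\N{G}{P}}$ for each $n$.

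Your Steps 1--3 are correct for a fixed $n$. Your route to a)$\Rightarrow$b) is also slightly more direct than the paper's. You kill $\OO{p'}{G/K_n}$ by applying Tate's theorem to its preimage, and you read off that $PK_n/K_n\cong P/\fratt{n}{P}$ is abelian. The paper instead identifies $\B{p}{G/K_1}$ with $\Irra{p',\mathbb Q_p}{\B pG}$ via Fong characters and then uses Lemma~\ref{interKernel}.

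However, with $\mathbb Q_{p^n}$ in the hypothesis you only prove that the $\mathbb Q_{p^n}$-version of a) is equivalent to b). For $n\ge 2$ you never show that the $\mathbb Q_p$-hypothesis forces $G=\N{G}{P}K_n$, and this is not formal. The $\mathbb Q_{p^n}$-condition trivially implies the $\mathbb Q_p$-condition, but the converse needs structural input. The other direction is fine: since $\Irra{p',\mathbb Q_p}{\B pG}\subseteq\Irra{p',\mathbb Q_{p^n}}{\B pG}$, your Step 3 applies verbatim.

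The missing bridge is $p$-length $1$, and that is exactly where your argument is wrong. It is false that $G/K_n$ is a $p$-group. Take $G=S_3$ and $p=3$: then $\N{G}{P}=G$ and $\Irra{3'}{\B{3}{G}}=\{1_G,\chi\}$ with $\chi(1)=2$. So $K_n=1$, while a) and b) both hold. Moreover, $p$-length $1$ means $P\OO{p'}{G}\unlhd G$, not that $G/\OO{p'}{G}$ is a $p$-group.

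The correct deduction from the case $n=1$ runs as follows.
\begin{enumerate}
\item From $n=1$ you have $G=\N{G}{P}K_1$, and $K_1=(P\cap K_1)\OO{p'}{G}$ by Lemma~\ref{berkobp}. Hence $G=\N{G}{P}\OO{p'}{G}$.
\item Since $\OO{p'}{G}\subseteq K_n$ for every $n$ by Lemma~\ref{interKernel}, the $\mathbb Q_p$-condition gives $G=\N{G}{P}K_n$.
\item Your Step 3 then yields the intersection formula for $K_n$.
\end{enumerate}
This is essentially what the paper does: it uses $p$-length $1$ to see that the $\mathbb Q_{p^n}$-restriction map is also a bijection, and then reruns the $n=1$ argument.

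Your uniqueness sentence also needs its final line. Let $N'\unlhd G$ satisfy $G=\N{G}{P}N'$ and $\N{G}{P}\cap N'=\fratt{n}{P}\OO{p'}{\N{G}{P}}$. Lemma~\ref{thompsonsep} and Theorem~\ref{largestbpprime} give $N'\subseteq K_n$ and $\N{G}{P}\cap K_n\subseteq\fratt{n}{P}\OO{p'}{\N{G}{P}}=\N{G}{P}\cap N'$. Comparing orders then gives $N'=K_n$.
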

\begin{proof}
    First we prove the result for $n=1$. Write $K=K_1$, suppose that the restriction map $\Irra{p',\mathbb Q_p}{\B p G}\to \Irra{p',\mathbb Q_p}{\B p{\N G P}}$ is a bijection and let $\chi\in\B p{G/K}$. By Corollary \eqref{intbpprime} we have that $\N G P\cap K=\frat P\OO{p'}{\N G P}$. Thus $\frat P\subseteq\ker{\chi_{\N G P}}$ and therefore $\chi_P$ contains just linear constituents of order at most $p$. By Lemma \eqref{quoPre} we have that $\chi\in\B p G$. Since the Fong characters of $\chi$ are linear, using now Theorem \eqref{fong} it holds that $\chi$ has $p'$-degree. Also, they have order at most $p$, and hence by Lemma \eqref{fongfield} it follows that $\Q(\chi)\subseteq\Q_{p}$, yielding $\chi\in\Irra{p',\mathbb Q_p}{\B p G}$. Therefore, by hypothesis we obtain that $\chi_{\N G P}\in\Irr{\N G P}$, and hence $\chi_{\N G PK}\in\Irr{\N G PK}$. Viewing now $\chi$ as a character of $G/K$ we have that
    $(PK/K)'\subseteq\ker\chi\cap\N G PK/K$, and since this holds for an arbitrary $\chi\in\B p{G/K}$, by Lemma \eqref{interKernel} we have that $(PK/K)'\subseteq\OO{p'}{G/K}$. Thus $PK/K$ is abelian, and hence applying Lemma \eqref{abhallnormal} we obtain that $(PK/K)\OO{p'}{G/K}\unlhd G$. However, as $\Irra{p',\mathbb Q_p}{\B p G}=\B p{G/K}$, by applying Lemma \eqref{quoPre} and Lemma \eqref{interKernel} it follows that $K/K=\OO{p'}{G/K}$, and hence $PK\unlhd G$, yielding $G=\N G PK$. By Lemma \eqref{berkobp} it holds that $K$ is $p$-nilpotent with normal $p$-complement $\OO{p'}G$. Therefore,
    \[G=\N G PK=\N GP(P\cap K)\OO{p'}G=\N G P\OO{p'}G,\]
    and hence $G$ has $p$-length $1$.

    Suppose now that $G=\N G PK$ and let $\chi\in\Irra{p',\mathbb Q_p}{\B p G}$. Since $K\subseteq\ker\chi$ and $G=\N G PK$, by elementary character theory we have that $\chi_{\N G P}\in\Irr{\N G P}$. Applying now Theorem \eqref{uniquepconst} it follows that the restriction map $\Irra{p',\mathbb Q_p}{\B p G}\to\Irra{p',\mathbb Q_p}{\B p{\N G P}}$ is a bijection.

    Suppose now that $n$ is an arbitrary positive integer and a) holds. Then $G$ has $p$-length $1$, and by Lemma \eqref{interKernel} we obtain that the restriction map
    \[\Irra{p',\mathbb Q_{p^n}}{\B p G}\to\Irra{p',\mathbb Q_{p^n}}{\B p{\N G P}}\]
    is a bijection. Proceeding as in the first part of the proof we obtain $G=\N G P K_n$ and $\N G P\cap K_n=\fratt n P\OO{p'}{\N G P}$. Also, if $G=\N G PK_n$, then $G=\N G PK$, as $K_n\subseteq K$.

    Now we prove the unicity of the relative normal complements $K_n$. Let $n$ be a positive integer and suppose that there is $N\unlhd G$ such that
     \[G=\N G PN\text{ and }\N G P\cap N=\fratt n P\OO{p'}{\N G P}.\]
Since $G$ is $p$-solvable, by Lemma \eqref{thompsonsep} we have $\N G P\cap N\subseteq\fratt n P\OO{p'}G$, and hence by Theorem \eqref{largestbpprime} it follows that $N\subseteq K_n$, yielding $G=\N G P K_n$. By the same theorem we have that $\N G P\cap K_n\subseteq\fratt n P\OO{p'}G$, and hence intersecting with $\N G P$ we obtain that
    \[\N G P\cap K_n\subseteq\fratt n P\OO{p'}G\cap\N G P=\fratt n P(\OO{p'}G\cap\N G P)=\fratt n P\OO{p'}{\N G P}.\]
    Thus, $\N G P\cap K_n=\fratt n P\OO{p'}{\N G P}$ and hence comparing cardinals we obtain that $N=K_n$.
\end{proof}
Suppose that $G$ is $p$-solvable and let $P\in\Syl p G$. In \cite{CN08}, Centella and Navarro proved that the following inequality holds.
\[\sum_{\chi\in\Irra{p'}{\B p G}}\chi(1)\geq\card{P:P'}.\]
Using Theorem A of the same paper and lemmas \eqref{fong}, \eqref{fongfield} and \eqref{isFong}, it is not difficult to check that
\[\sum_{\chi\in\Irra{p',\mathbb Q_{p^n}}{\B p G}}\chi(1)\geq\card{P:\fratt n P}\]
for each positive integer $n$.
We characterize when the equality holds.
\begin{cor}
     Let $G$ be $p$-solvable, let $P\in\Syl pG$ and let $n$ be a positive integer. Then the following are equivalent.
    \begin{enumerate}[a)]
        \item $\sum_{\chi\in\Irra{p',\mathbb Q_{p^n}}{\B p G}}\chi(1)=\card{P:\fratt n P}.$
        \item There is $N\unlhd G$ such that $G=\N G PN$ and $\N G P\cap N=\fratt n P\OO{p'}{\N G P}$.
    \end{enumerate}
\end{cor}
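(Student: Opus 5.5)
The plan is to reduce both directions to a degree count over linear characters of $P$, and to get the group-theoretic conditions from Theorem A together with Theorem \eqref{largestbpprime}. Write $K=K_n=\bigcap_{\chi\in\Irra{p',\mathbb Q_{p^n}}{\B p G}}\ker\chi$ and $\Lambda=\Irr{P/\fratt nP}$, so $\card\Lambda=\card{P:\fratt nP}$.

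\emph{Step 1: the counting identity.} For each $\lambda\in\Lambda$, Theorem \eqref{isFong} makes $\lambda$ a Fong character of some $\alpha_\lambda\in\B pG$. Theorem \eqref{fong} gives that $\alpha_\lambda$ has $p'$-degree, and Lemma \eqref{fongfield} gives $\Q(\alpha_\lambda)\subseteq\Q_{p^n}$. By Theorem \eqref{fong}(a),(b), $[(\alpha_\lambda)_P,\lambda]=1$ and $\lambda$ lies under no other member of $\B pG$. Hence
\[\sum_{\chi\in\Irra{p',\mathbb Q_{p^n}}{\B p G}}\chi(1)\;\ge\;\sum_{\chi}\sum_{\lambda\in\Lambda}[\chi_P,\lambda]\;=\;\card\Lambda .\]
Equality holds if and only if, for every $\chi$ in the set, $\chi_P$ is a sum of elements of $\Lambda$. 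This is the same as $\fratt nP\subseteq\ker\chi$ for all such $\chi$, that is, $\fratt nP\subseteq K$. So a) is equivalent to $\fratt nP\subseteq K$.

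\emph{Step 2: a) $\Rightarrow$ b).} Assume $\fratt nP\subseteq K$. Lemma \eqref{berkobp} gives $P\cap K\subseteq\fratt nP$, so $P\cap K=\fratt nP$ and $PK/K\cong P/\fratt nP$ is abelian.

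Next I want $\OO{p'}{G/K}=1$. By Lemma \eqref{quoPre}, every $\chi\in\B p{G/K}$ lies in $\B pG$. Since $\chi_P$ has all constituents in $\Lambda$, its Fong characters are linear of order at most $p^n$. Theorem \eqref{fong} and Lemma \eqref{fongfield} then put $\chi$ in $\Irra{p',\mathbb Q_{p^n}}{\B pG}$, exactly as in the proof of Theorem A. So $\B p{G/K}$ is precisely the set defining $K$, and Lemma \eqref{interKernel} yields $\OO{p'}{G/K}=1$.

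Lemma \eqref{abhallnormal} now gives $PK\unlhd G$, and the Frattini argument gives $G=\N GPK$. For the intersection, note $\fratt nP\subseteq K$, and $\OO{p'}{\N GP}\subseteq\OO{p'}G\subseteq K$ by Lemmas \eqref{thompsonsep} and \eqref{interKernel}; this gives one containment. For the reverse, Theorem \eqref{largestbpprime} gives $\N KP\subseteq\fratt nP\OO{p'}G$. Intersecting with $\N GP$, as in the uniqueness part of Theorem A, gives $\N GP\cap K\subseteq\fratt nP\OO{p'}{\N GP}$. So $N=K$ works.

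\emph{Step 3: b) $\Rightarrow$ a).} By the uniqueness statement in Theorem A, any such $N$ equals $K$. Then $\fratt nP\subseteq\N GP\cap K\subseteq K$, and Step 1 gives equality of the sum.

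The main obstacle is making Step 1 airtight. The point is that $\chi(1)\ge\sum_{\lambda\in\Lambda}[\chi_P,\lambda]$, with equality exactly when all constituents of $\chi_P$ are in $\Lambda$. This is where the multiplicity-one statement of Theorem \eqref{fong} is used: every linear constituent of $\chi_P$ is a minimal-degree constituent of $\chi_P$, hence a Fong character of $\chi$. Theorem \eqref{fong}(b) is what guarantees that distinct $\chi$ account for disjoint subsets of $\Lambda$ and that every $\lambda\in\Lambda$ is counted.
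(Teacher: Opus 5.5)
Your argument is correct, but it is built around a different key step than the paper's proof. For a)$\Rightarrow$b), the paper applies the degree inequality to $\N GP$ as well and compares the two sums through the Centella--Navarro bijection $\chi\mapsto\chi^*$ of Theorem \eqref{uniquepconst}. Since $\chi^*(1)\le\chi(1)$, equality forces every $\chi\in\Irra{p',\mathbb Q_{p^n}}{\B pG}$ to restrict irreducibly to $\N GP$, and Theorem \eqref{mainthm} is then quoted. For b)$\Rightarrow$a), the paper takes $N=K_n$ and transports $\B p{G/K_n}$ to $\B p{\N GP/\fratt nP\OO{p'}{\N GP}}$ via the isomorphism of quotients. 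It then evaluates the degree sum there with Theorem 3.18 of \cite{I18}.

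You instead make the exact Fong count $\sum_\chi\sum_{\lambda\in\Lambda}[\chi_P,\lambda]=\card\Lambda$ the central tool, which turns a) into the purely kernel-theoretic condition $\fratt nP\subseteq K_n$. From there, a)$\Rightarrow$b) is a direct rerun of the structural core of the proof of Theorem A, and b)$\Rightarrow$a) is immediate.

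What your route buys:
\begin{enumerate}
\item It actually proves the inequality that the paper only asserts is ``not difficult to check''.
\item It isolates the intermediate equivalence a)$\iff\fratt nP\subseteq K_n$.
\item It needs neither the degree-sum formula from \cite{I18} nor the equivalence part of Theorem A, only its uniqueness clause.
\end{enumerate}
The paper's route, in exchange, shows directly that equality in the degree sum amounts to irreducible restriction of these characters to $\N GP$.

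Two minor streamlinings are available:
\begin{enumerate}
\item In Step 2, $\OO{p'}{G/K_n}=1$ holds with no hypothesis at all. Every $\chi\in\Irra{p',\mathbb Q_{p^n}}{\B pG}$ already lies in $\B p{G/K_n}$ by Lemma \eqref{quoPre}, so identifying all of $\B p{G/K_n}$ is unnecessary.
\item In Step 3, Theorem \eqref{largestbpprime} together with Lemma \eqref{thompsonsep} already gives $N\subseteq K_n$, hence $\fratt nP\subseteq N\subseteq K_n$, without invoking uniqueness.
\end{enumerate}
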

\begin{proof}
    If the equality holds, then by the previous observation we obtain that \[\sum_{\chi\in\Irra{p',\mathbb Q_{p^n}}{\B p G}}\chi(1)\leq\sum_{\psi\in\Irra{p',\mathbb Q_{p^n}}{\B p{\N G P}}}\psi(1).\]
    Using now Theorem \eqref{uniquepconst} it follows that each $\chi\in\Irra{p',\mathbb Q_{p^n}}{\B p G}$ restricts irreducibly to $\N G P$, and hence by Theorem \eqref{mainthm} we are done in this case.
    
    Suppose now that there is $N\unlhd G$ with 
    \[G=\N G PN \text{ and } \N G P\cap N=\fratt n P\OO{p'}{\N G P}.\]
    By Lemma \eqref{mainthm} we have that $N=K$, where $K=\bigcap_{\chi\in\Irra{p',\mathbb Q_{p^n}}{\B p G}}\ker\chi$. Proceeding as in the proof of Theorem \eqref{mainthm} we obtain that $\B p{G/K}=\Irra{p',\mathbb Q_{p^n}}{\B p G}$. Since $G/K\cong\N G P/{\fratt n P\OO{p'}{\N G P}}$, the restriction map
    \[\B p{G/K}\to\B p{\N G P/{\fratt n P\OO{p'}{\N G P}}}\]
    is a bijection. Applying now Theorem (3.18) of \cite{I18} we obtain
    \[\begin{aligned}
        \sum_{\chi\in\Irra{p',\mathbb Q_{p^n}}{\B p G}}\chi(1)&=\sum_{\psi\in\B p{\N G P/{\fratt n P\OO{p'}{\N G P}}}}\psi(1)\\&=\card{\N G P/{\fratt n P\OO{p'}{\N G P}}}_p\\&=\card{P/\fratt n P}.
    \end{aligned}\]
\end{proof}
So far, we have focused on a single prime $p$. To extend our setting to a set of primes $\pi$, we proceed in the following way. Suppose that $H$ is a nilpotent Hall $\pi$-subgroup of a $\pi$-separable group $G$ and write $H=P_1\times\ldots\times P_k$ for its distinct Sylow subgroups, where $P_i\in\Syl{p_i}H$. Let $n_1,\ldots,n_k$ be positive integers, where $k=\card\pi$, and denote $\bar n=(n_1,\ldots,n_k)$. We define $\fratt{\bar n} H=\prod_{i=1}^k\fratt{n_i}{P_i}$. Then $\fratt{\bar n}H$ is the unique smallest normal subgroup of $H$ whose quotient is isomorphic to a direct product of cyclic $p_i$-subgroups of order at most $p_i^{n_i}$, where $1\leq i\leq k$. We also denote $\pi^{\bar n}=\prod_{i=1}^kp_i^{n_i}$, where this value depends on $H$, and $\pi^{\bar 1}=\prod_{i=1}^kp_i$. To obtain the nilpotent Hall $\pi$-subgroup version of Theorem \eqref{mainthm} we need the following lemma.
\begin{lem}
    Let $H$ be a nilpotent Hall $\pi$-subgroup of a group $G$. Let $\bar n=(n_1,\ldots,n_{\card\pi})$ be a sequence of positive integers and let $\psi\in\Irra{\pi',\mathbb Q_{\pi^{\bar n}}}{\N G H}$. Then there exists a constituent $\lambda\in\Irr H$ of $\psi_H$ with $\fratt{\bar n}H\subseteq\ker\lambda$.
\end{lem}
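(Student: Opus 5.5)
The plan is to adapt the Galois-action argument from the first part of the proof of Corollary \eqref{intbpprime} to several primes at once. First reduce to $H\neq 1$ and to each $p_i^{n_i}\le\card{P_i}$. This reduction is harmless: replacing $n_i$ by $\min(n_i,\log_{p_i}\card{P_i})$ shrinks $\mathbb Q_{\pi^{\bar n}}$, so $\psi$ still has values in it, and it does not change $\fratt{\bar n}H$. (If $\card{P_i}=1$ that component is irrelevant.)

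Let $m=\card H$ and consider the Galois group $\mathcal G={\rm Gal}(\mathbb Q_m/\mathbb Q_{\pi^{\bar n}})$. Here $\pi^{\bar n}$ divides $m$. By the Chinese remainder theorem, $\mathcal G$ is isomorphic to the kernel of $(\mathbb Z/m)^\times\to(\mathbb Z/\pi^{\bar n})^\times$. That kernel is $\prod_i\ker\big((\mathbb Z/\card{P_i})^\times\to(\mathbb Z/p_i^{n_i})^\times\big)$, and the $i$-th factor has order $\card{P_i}/p_i^{n_i}$, a $p_i$-power. So $\mathcal G$ is a nilpotent $\pi$-group.

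Next, $\mathcal G$ acts on $\Irr H$, since values of characters of $H$ lie in $\mathbb Q_m$. This action fixes $\psi_H$, because $\mathbb Q(\psi)\subseteq\mathbb Q_{\pi^{\bar n}}$. Hence $\mathcal G$ permutes the set $\Omega$ of irreducible constituents of $\psi_H$. As $H\unlhd\N GH$, Clifford's theorem says $\Omega$ is a single $\N GH$-orbit, so $\card\Omega$ divides $\psi(1)$ and is therefore a $\pi'$-number. Because $\mathcal G$ is a $\pi$-group, every $\mathcal G$-orbit on $\Omega$ has $\pi$-power length, and their sum $\card\Omega$ is coprime to every prime in $\pi$. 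Hence some orbit has length $1$: there is a $\mathcal G$-invariant $\lambda\in\Omega$. Also $\lambda(1)$ divides $\psi(1)$ and divides $\card H$, so $\lambda(1)=1$.

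Since $\lambda$ is $\mathcal G$-invariant and $\mathbb Q_m/\mathbb Q_{\pi^{\bar n}}$ is Galois, $\mathbb Q(\lambda)\subseteq\mathbb Q_{\pi^{\bar n}}$. Write $\lambda=\lambda_1\times\cdots\times\lambda_k$ with $\lambda_i\in\Lin{P_i}$. Then $\mathbb Q(\lambda)=\mathbb Q_{o(\lambda)}$ with $o(\lambda)=\prod_i o(\lambda_i)$, and $\mathbb Q_{o(\lambda)}\subseteq\mathbb Q_{\pi^{\bar n}}$ forces $o(\lambda)$ to divide $\pi^{\bar n}$ up to the usual factor $2$. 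Comparing $p_i$-parts gives $o(\lambda_i)\mid p_i^{n_i}$, with a small parity check when $p_i=2$: since $\mathbb Q_{2^{a}}\subseteq\mathbb Q_{2^{n}\cdot\text{odd}}$ with $n\ge 1$ forces $a\le n$, this is fine. Therefore $\fratt{n_i}{P_i}\subseteq\ker{\lambda_i}$ for each $i$, and so $\fratt{\bar n}H=\prod_i\fratt{n_i}{P_i}\subseteq\ker\lambda$, which is the statement.

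The main obstacle is verifying that $\mathcal G$ is a $\pi$-group, which is the cyclotomic computation in the second paragraph, together with the field-inclusion step showing that $\mathbb Q_{o(\lambda)}\subseteq\mathbb Q_{\pi^{\bar n}}$ gives $o(\lambda)\mid\pi^{\bar n}$. Both follow from standard facts about cyclotomic fields once the $n_i$ are capped as above.
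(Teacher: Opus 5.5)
Your argument has a genuine gap at the fixed-point step. From ``every $\mathcal G$-orbit on $\Omega$ has length a $\pi$-number and $|\Omega|$ is a $\pi'$-number'' you cannot conclude that some orbit has length $1$ once $|\pi|\ge 2$. Orbit lengths $2$ and $3$ sum to $5$. Concretely, $C_2\times C_3$ acts on five points, swapping two and cycling the other three, with no fixed point.

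Splitting the abelian group $\mathcal G$ into its Sylow subgroups does not rescue pure counting. The Sylow $2$-subgroup has fixed points, but their number (here $3$) need not be prime to $3$. So the Sylow $3$-subgroup may act on them without fixed points. Your counting is valid only for a single prime, which is the case already handled in Corollary~\ref{intbpprime}. The several-prime case is exactly the new content of this lemma.

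The missing ingredient is the transitive action of $\mathbf N_G(H)$ on $\Omega$, together with the fact that it commutes with the Galois action: $(\lambda^x)^\sigma=(\lambda^\sigma)^x$ for $\sigma\in\mathcal G$ and $x\in\mathbf N_G(H)$. The paper uses this through Glauberman's lemma. There, the $\pi$-group $\mathcal G$ acts trivially, hence compatibly and coprimely, on the $\pi'$-group $\mathbf N_G(H)/H$, which is transitive on $\Omega$.

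An elementary substitute fits your setup:
\begin{enumerate}
\item For any subgroup $\mathcal S\le\mathcal G$, the fixed-point set $\Omega^{\mathcal S}$ is $\mathbf N_G(H)$-stable, so by transitivity it is either empty or all of $\Omega$.
\item Take $\mathcal S$ to be the Sylow $p_i$-subgroup of $\mathcal G$. Your counting shows $\Omega^{\mathcal S}\neq\emptyset$, so $\mathcal S$ fixes $\Omega$ pointwise.
\item Since $\mathcal G$ is generated by its Sylow subgroups, $\mathcal G$ fixes every $\lambda\in\Omega$.
\end{enumerate}
With that repair, the rest of your proof is fine: the computation that $\mathcal G$ is a $\pi$-group, the linearity of $\lambda$, and the cyclotomic comparison including the $2$-adic check.

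One smaller point: your justification of the reduction to $p_i^{n_i}\le|P_i|$ is backwards. Shrinking the field strengthens the hypothesis on $\psi$; it does not preserve it automatically. The correct reason is that the values of $\psi_H$ lie in $\mathbb Q_{|H|}\cap\mathbb Q_{\pi^{\bar n}}=\mathbb Q_{\gcd(|H|,\pi^{\bar n})}$, and that is all you use. Alternatively, skip the reduction and work in $\mathbb Q_{\mathrm{lcm}(|H|,\pi^{\bar n})}$.
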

\begin{proof}
    Using the previous notation, we may assume that $p_i^{n_i}\leq\card{P_i}$ for each $i$.
    Let $m=\card G_{\pi'}$ and let $\mathcal G={\rm Gal}(\mathbb Q_{\card G}/\mathbb Q_{\pi^{\bar n}m})$. By elementary Galois theory we have that $\mathcal G$ is a $\pi$-group. Write $\Omega$ for the set of irreducible constiuents of $\psi_H$. Since $H$ acts trivially on $\Omega$, by Clifford's theorem we have that the $\pi'$-group $\N G H/H$ acts transitively on $\Omega$. Then $\mathcal G$ defines a (trivial) coprime action by automorphisms on $\N GH/H$. Now, the Galois action on $\Omega$ commutes with the conjugation action of $\N GH/H$. Hence, by the odd order theorem and Glauberman's lemma (see Lemma (13.8) of \cite{I06}) we have that there is a $\mathcal G$-invariant constituent $\lambda$ of $\psi_H$. Write $\lambda=\lambda_1\times\ldots\times\lambda_{\card\pi}$, where $\lambda_i\in\Irr{P_i}$. As $\mathbb Q_{\card G}/\mathbb Q_{\pi^{\bar n}m}$ is Galois, we have $\Q_{o(\lambda_1)\ldots o(\lambda_n)}\subseteq\mathbb Q_{\pi^{\bar n}m}$. Let $1\leq i\leq \card\pi$. By elementary Galois theory it follows that $\Q_{o(\lambda_i)}\subseteq\mathbb Q_{p_i^{n_i}}$, and hence $o(\lambda_i)$ divides $p_i^{n_i}$. In particular $\Phi^{n_i}(P_i)\subseteq\ker{\lambda_i}$, and by the definition of $\Phi^{\bar n}(H)$ it holds that
    \[\Phi^{\bar n}(H)\subseteq\ker{\lambda_1}\times\ldots\times\ker{\lambda_{\card\pi}}=\ker{\lambda_1\times\ldots\times\lambda_{\card\pi}}=\ker\lambda.\]
\end{proof}
To keep the notation simple, we have presented the results of this section for a single prime $p$. Nevertheless, all of them remain true if we replace the prime $p$ with a set of primes $\pi$, the Sylow $p$-subgroup $P$ with a nilpotent Hall $\pi$-subgroup $H$, each positive integer $n$ with a sequence of positive integers $\bar n=(n_1,\ldots,n_{\card\pi})$, and $\fratt n P$ with $\fratt{\bar n}H$.
The proofs we gave also work for the nilpotent Hall $\pi$-subgroup case, except for that of Lemma \eqref{intbpprime}. However, using the previous lemma it follows immediately. Hence, the following $\pi$-version of Theorem \eqref{mainthm} also holds.
\begin{thm}
    Let $G$ be $\pi$-separable with a nilpotent Hall $\pi$-subgroup $H$, and let $\bar n=(n_1,\ldots,n_{\card\pi})$ be a sequence of positive integers. Write $K_{\bar n}=\bigcap_{\chi\in\Irra{\pi',\mathbb Q_{\pi^{\bar n}}}{\B\pi G}}\ker\chi$. Then the following are equivalent.
    \begin{enumerate}[a)]
        \item The restriction map $\Irra{\pi',\mathbb Q_{\pi^{\bar 1}}}{\B\pi G}\to \Irra{\pi',\mathbb Q_{\pi^{\bar 1}}}{\B\pi{\N G H}}$ is a bijection.
        \item It holds that $G=\N G HK_{\bar n}$.
    \end{enumerate}
    Moreover, if the previous holds, then $G$ has $\pi$-length $1$ and \[\N G H\cap K_{\bar n}=\fratt{\bar n}H\OO{\pi'}{\N G H}.\] In general, the relative complement $K_{\bar n}$ of $\N G H$ is unique.
\end{thm}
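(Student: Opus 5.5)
We follow the proof of Theorem \eqref{mainthm} step by step, replacing $p$, $P$, $n$ and $\fratt nP$ by $\pi$, $H$, $\bar n$ and $\fratt{\bar n}H$, and checking that each earlier ingredient has a $\pi$-version.

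The first task is the $\pi$-analogue of Lemma \eqref{berkobp}. Write $L=\bigcap_{\chi\in\Irra{\pi',\mathbb Q_{\pi^{\bar n}}}{\B\pi G}}\ker\chi$. We want $L$ to have normal $\pi$-complement $\OO{\pi'}G$ and to satisfy $H\cap L\subseteq\fratt{\bar n}H$. Every linear $\lambda\in\Irr{H/\fratt{\bar n}H}$ is quasiprimitive. By Theorem \eqref{isFong} it is therefore the Fong character of some $\alpha_\lambda\in\Irra{\pi'}{\B\pi G}$. By Lemma \eqref{fongfield} we have $\Q(\alpha_\lambda)\subseteq\Q_{o(\lambda)}\subseteq\Q_{\pi^{\bar n}}$.

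Next we need $L\subseteq\ker{\alpha_\lambda}$. If $\alpha_\lambda$ is linear, multiply it by a nonlinear $\psi$ in the set and use Theorem \eqref{centnav} with a $\pi$-complement. If every character in the set is linear, use the square-free remark after Lemma \eqref{tatefrat}. Either way $L\cap H\subseteq\ker\lambda$, so $L\cap H\subseteq\fratt{\bar n}H\subseteq\frat H$. Tate's theorem in its nilpotent-Hall form then gives a normal $\pi$-complement of $L$, and Lemma \eqref{interKernel} identifies it with $\OO{\pi'}G$.

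The second task is the analogue of Corollary \eqref{intbpprime}:
\[\bigcap_{\psi\in\Irra{\pi',\Q_{\pi^{\bar n}}}{\B\pi{\N GH}}}\ker\psi=\fratt{\bar n}H\OO{\pi'}{\N GH}.\]
The inclusion $\supseteq$ for $\fratt{\bar n}H$ comes from the lemma just proved. It gives a constituent $\lambda$ of $\psi_H$ with $\fratt{\bar n}H\subseteq\ker\lambda$. Since $\fratt{\bar n}H$ is characteristic in $H$, hence normal in $\N GH$, Clifford's theorem yields $\fratt{\bar n}H\subseteq\ker\psi$. The reverse inclusion follows from the $\pi$-version of Lemma \eqref{berkobp} applied to $\N GH$.

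The third task is Theorem \eqref{largestbpprime}. Here $K_{\bar n}$ is the largest normal $N$ with $\N NH\subseteq\fratt{\bar n}H\OO{\pi'}G$. The proof carries over using Theorem \eqref{uniquepconst}, which is already stated for $\pi$-separable groups, and Lemma \eqref{thompsonsep}. The reduction modulo the normal $\pi$-complement $Q$ of $N$ needs $\fratt{\bar n}{HQ/Q}=\fratt{\bar n}HQ/Q$, which is proved as before, factor by factor.

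Finally, the main argument. Assuming a), take $\chi\in\B\pi{G/K}$ with $K=K_{\bar 1}$. Its Fong characters are linear of square-free order, so $\chi\in\Irra{\pi',\Q_{\pi^{\bar 1}}}{\B\pi G}$ and $\chi$ restricts irreducibly to $\N GH$. This forces $(HK/K)'\subseteq\OO{\pi'}{G/K}=1$. Lemma \eqref{abhallnormal} then gives $HK\unlhd G$, so $G=\N GHK$, and $\pi$-length $1$ follows as before. The passage from $\bar 1$ to a general $\bar n$, the converse b)$\Rightarrow$a), and uniqueness by comparing orders are verbatim.

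The step I expect to be the main obstacle is the first one, showing that the Fong characters $\lambda$ of order dividing $\pi^{\bar n}$ produce characters whose kernels intersect $H$ inside $\fratt{\bar n}H$. This is where nilpotency of $H$ is used essentially, in splitting $\lambda$ into Sylow components and in applying Tate prime by prime.
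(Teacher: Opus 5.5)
Your proposal is correct and is essentially the paper's own argument. The paper likewise says every proof from the $p$-case transfers verbatim to a nilpotent Hall $\pi$-subgroup, the only non-routine ingredient being the $\pi$-analogue of Corollary~\ref{intbpprime}, obtained from the Glauberman-type lemma exactly as in your second task.

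Two minor points. In your first task, $L\subseteq\ker{\alpha_\lambda}$ is immediate because $\alpha_\lambda$ itself belongs to the set defining $L$; the product trick with a nonlinear $\psi$ is only needed when the intersection runs over nonlinear characters. In the final step, the claim that the Fong characters of $\chi\in\B\pi{G/K}$ are linear of square-free order should be justified explicitly: a) together with your second task gives $\N GH\cap K=\frat H\OO{\pi'}{\N GH}\supseteq\frat H$.
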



\begin{thebibliography}{ABCD}

\bibitem[B98]{B98}
{\sc Y. Berkovich}, Subgroups with the character restriction property and related topics. \emph{Houston J. Math.} \textbf{24} (1998), no. 4, 631--638.

\bibitem[CN08]{CN08}
\textsc{P. Centella}, \textsc{G. Navarro}, Correspondences between constituents of projective characters, \emph{Arch. Math. (Basel)} \textbf{90} (2008), no. 4, 289--294.
\bibitem[I84]{I84}
\textsc{I. M. Isaacs}, Characters of $\pi$-separable groups, \emph{J. Algebra} \textbf{86} (1984), no. 1, 98--128.

\bibitem[I86]{I86}
\textsc{I. M. Isaacs}, Subgroups with the character restriction property, \emph{J. Algebra} \textbf{100} (1986), no. 2, 403--420.

\bibitem[I06]{I06}
\textsc{I. M. Isaacs}, \emph{Character Theory of Finite Groups}, AMS-Chelsea, Providence, 2006.

\bibitem[I18]{I18}
\textsc{I. M. Isaacs}, \emph{Characters of Solvable Groups}, Mathematical Surveys and Monographs \textbf{189}, American Mathematical Society, 2018.

\bibitem[IN05]{IN05}
\textsc{I. M. Isaacs} and \textsc{G. Navarro}, A characteristic subgroup and kernels of Brauer characters, \emph{Bull. Aust. Math. Soc.} \textbf{72} (2005), no. 3, 381--384.

\bibitem[N98]{N98}
\textsc{G. Navarro}, \emph{Characters and Blocks of Finite Groups}, Cambridge University Press, 1998.

\bibitem[N18]{N18}
\textsc{G. Navarro}, \emph{Character Theory and the McKay Conjecture}, Cambridge University Press, Cambridge, 2018.

\bibitem[T64]{T64}
\textsc{J. Tate}, Nilpotent quotient groups, \emph{Topology} \textbf{3} (1964), 109--111.

\end{thebibliography}
\end{document}